\documentclass[final,3p]{elsarticle}
\usepackage{lineno,hyperref}
\modulolinenumbers[5]
\usepackage[all,cmtip]{xy}
\usepackage{array}
\usepackage{amsmath, amssymb, amsthm}
\usepackage{graphics}
\usepackage{algorithm}
\usepackage{algorithmicx}
\usepackage{booktabs}
\usepackage[dvipsnames]{xcolor}
\usepackage{cancel}
\usepackage{caption}
\usepackage{subcaption}
\usepackage[noend]{algpseudocode}
\usepackage{ifthen}
\usepackage{bm}
\usepackage{pdflscape}
\usepackage{pifont}
\usepackage[normalem]{ulem}
\usepackage{multirow}
\hyphenation{para-me-tri-zed}

\def\bfb{{\boldsymbol{b}}}
\def\bfc{{\mathbf{c}}}

\def\bfp{{\boldsymbol{p}}}
\def\bfq{{\boldsymbol{q}}}

\def\bfx{{\boldsymbol{x}}}

\def\bfu{{\boldsymbol{u}}}
\def\bfv{{\boldsymbol{v}}}
\def\bfw{{\boldsymbol{w}}}

\def\bfC{{\bm{C}}}

\newtheorem{theorem}{Theorem}
\newtheorem{corollary}[theorem]{Corollary}
\newtheorem{proposition}[theorem]{Proposition}
\newtheorem{lemma}[theorem]{Lemma}
\theoremstyle{definition}

\newtheorem{example}{Example}
\newtheorem{remark}{Remark}

\graphicspath{{./figs/}}

\begin{document}

\begin{frontmatter}
\title{Detecting affine equivalences between certain types of parametric curves, in any dimension}

\author[a]{Juan Gerardo Alc\'azar}
\ead{juange.alcazar@uah.es}
\author[b]{H\"usn\"u An{\i}l \c{C}oban}
\ead{hacoban@ktu.edu.tr}
\author[b]{U\u{g}ur G\"oz\"utok}
\ead{ugurgozutok@ktu.edu.tr}

\address[a]{Departamento de F\'{\i}sica y Matem\'aticas, Universidad de Alcal\'a,
E-28871 Madrid, Spain}
\address[b]{Department of Mathematics, Karadeniz Technical University, Trabzon, Türkiye}

%\fntext[proy]{Supported by the grant PID2020-113192GB-I00 (Mathematical Visualization: Foundations, Algorithms and Applications) from the Spanish MICINN. Juan G. Alc\'azar and Carlos Hermoso are also members of the Research Group {\sc asynacs} (Ref. {\sc ccee2011/r34}).  }

\begin{abstract}
Two curves are affinely equivalent if there exists an affine mapping transforming one of them onto the other. Thus, detecting affine equivalence comprises, as important particular cases, similarity, congruence and symmetry detection. In this paper we generalize previous results by the authors to provide an algorithm for computing the affine equivalences between two parametric curves of certain types, in any dimension. In more detail, the algorithm is valid for rational curves, and for parametric curves with non-rational but meromorphic components admitting a rational inverse. Unlike other algorithms already known for rational curves, the algorithm completely avoids polynomial system solving, and uses bivariate factoring, instead, as a fundamental tool. The algorithm has been implemented in the computer algebra system {\tt Maple}, and can be freely downloaded and used.  
\end{abstract}

\end{frontmatter}

\section{Introduction.}

We say that two curves are \emph{affinely equivalent} if one of them is the image of the other curve by means of an affine mapping. If the affine mapping preserves angles, then the two curves are \emph{similar}, i.e. both correspond to the same shape and differ only in position and$/$or scaling. If the affine mapping preserves distances, the curves are \emph{congruent}, so they differ only in position. Finally, if the two curves coincide, finding the self-congruences of the curve is equivalent to computing its \emph{symmetries}.

Because of the nature of the problem, it has received some attention in the applied fields of Computer Aided Geometric Design, Pattern Recognition and Computer Vision. In the last years, the problem has been also addressed in the Computer Algebra field, and this paper follows this trend. Examples of papers where this question has been studied are \cite{BLV, Gozutok, HJ18}; for other related papers, the interested reader can check the bibliographies of \cite{BLV, Gozutok, HJ18}. These papers address the problem for \emph{rational} curves, i.e. parametric curves whose components are quotients of polynomial functions, and aim to the more general question of checking projective, and not just affine, equivalence. While \cite{BLV,HJ18} provide solutions for this problem, with different strategies, for curves in any dimension and using as a fundamental tool polynomial system solving, the paper \cite{Gozutok} addresses the question only for space rational curves, but employing bivariate factoring as an alternative to solving polynomial systems; this leads to better timings and performance. To do this, in \cite{Gozutok} two \emph{rational invariants}, i.e. two functions rationally depending on the parametrizations to be studied which stay invariant under projective transformations, are found and used. 

In this paper we generalize the ideas of \cite{Gozutok} in three different ways. First, while the development of the invariants in \cite{Gozutok} was more of an ``art" than of a ``craft", here we provide a complete algorithm to generate such invariants. Second, the technique is valid for curves in any dimension, and not just space curves, which was the case addressed in \cite{Gozutok}: we provide an algorithm, that can be downloaded from \cite{website}, to generate the corresponding invariants for any dimension, that needs to be executed just once for each dimension. Third, our strategy is valid not only for rational curves, but also for non-algebraic, parametric curves with meromorphic components under certain conditions, so we can apply the algorithm for helices or 3D spirals, for instance, whenever some hypotheses are fulfilled. In order to also include this type of non-algebraic curves, we stick to affine equivalences, and not projective equivalences, although our ideas could be developed in a projective setting for rational curves. Notice that strategies like \cite{BLV, HJ18}, based on polynomial system solving, cannot handle non-algebraic curves because we would need to solve systems involving analytic functions. 

The idea in \cite{BLV, Gozutok, HJ18} is to compute the equivalences between the curves by determining first an associated function in the parameter space. If the curve parametrizations are rational and \emph{proper}, i.e. generically injective, the fact that the M\"obius functions are the birational transformations of the complex line guarantees that the associated function is a M\"obius function. Thus, one aims to compute first the associated M\"obius functions, and derives the equivalences between the curves from them. However, the M\"obius functions are also the bi-meromorphic transformations of the complex line, which opens up the possibility, which we explore in this paper, of solving the problem for more general, not necessarily rational parametrizations. However, in order to do this we need that these parametrizations have meromorphic \emph{global} inverses. For proper rational parametrizations, it is well-known that a global inverse exists, and is rational. For non-rational parametrizations, guaranteeing the existence of a global inverse is difficult. For this reason, 
in the case of non-rational curves we restrict ourselves to certain types of parametrizations: despite of the fact that these parametrizations are not rational, we can guarantee that they have a global, \emph{rational} inverse, therefore meromorphic, so our ideas can be applied. 

%As in the case of \cite{Gozutok}, the main computational tool that we need is bivariate factoring. For rational curves this functionality is implemented in any computer algebra system; in our case we used {\tt Maple}. For non-rational curves we depend on the ability of the computer algebra system to factor bivariate non-algebraic expressions. We noticed, and were surprised by the fact, that {\tt Maple} is very good at this. 

Although the implementation of our algorithm is relatively simple, and can be downloaded from \cite{website} jointly with the examples worked out in this paper, its development is involved. Because of this, we present the main ideas and results behind the algorithm, as well as several examples, in Section \ref{state} and Section \ref{firstsep}, so that the reader only interested in using the algorithm can get a full idea by the end of Section \ref{firstsep}. Justifying the correctness of the algorithm requires to justify three steps that we refer to as steps (i), (ii) and (iii): while (i) and (iii) are more accessible, and are fully justified in Section \ref{firstsep}, justifying step (ii), which is the step where we generate the invariants, requires major work and is postponed to Section \ref{difficult}. Nevertheless, the intuitive idea behind the justification of step (ii) is also given in Section \ref{firstsep}. We close the paper with our conclusion and some open questions in Section \ref{conclusion}. Some technical proofs are provided in Appendix I.

\section{Background, statement of the problem and required tools.}\label{state}

\subsection{Background, auxiliary results and statement of the problem}

Let us start by describing the kind of parametric curves we will work with. The key idea is that they must be parametric curves with rational inverses. However, we will make precise their structure so that we can algorithmically verify that this requirement is satisfied. 

We need two ingredients to do this; the first ingredient is a meromorphic function $\xi:{\Bbb C}\to {\Bbb C}$. Defining
\[
\Pi:{\Bbb C}\to {\Bbb C}^2,\mbox{ }\Pi(z)=(z,\xi(z)),
\]
we observe that $\Pi$ is an invertible function over its image, which is the graph ${\mathcal G}_\xi$ of the function $\xi$,
\[
{\mathcal G}_\xi=\{(z,\xi(z))|z\in {\Bbb C}\}\subset {\Bbb C}^2.
\]
Indeed, for $(z,\omega)\in {\Bbb C}^2$, $\omega=\xi(z)$, we have $\Pi^{-1}(z,\omega)=z$. The second ingredient is a rational mapping $\Phi:{\Bbb C}^2\to {\Bbb C}^n$. If we compose these two mappings, we get a new mapping 
\begin{equation}\label{thex}
\bfp=\Phi\circ \Pi:{\Bbb C}\to {\Bbb C}^n
\end{equation}
which provides a parametrization $\bfp(z)=\Phi(z,\xi(z))$ of a curve $\bfC\subset {\Bbb C}^n$, which is the image of ${\mathcal G}_\xi$ under $\Phi$. Notice that $\bfp$ is a vector function with meromorphic components. Of course if $\xi$ is a rational function, $\bfp$ is just a rational parametrization. We will also assume that the curve defined by $\bfp$ is not contained in a hyperplane.

What we really need is the condition that the restriction $\Pi|_{{\mathcal G}_\xi}$ is birational, so that $\bfp$ has a rational inverse. If $\xi$ is a rational function, what implies that $\bfp(z)$ is a rational parametrization, we will just assume that $\bfp(z)$ is \emph{proper}, i.e. generically injective, in which case $\bfp^{-1}$ exists and is rational. So let us provide sufficient conditions to guarantee this also in the case when $\xi$ is not a rational function. In that case, we will assume that $\Phi$ is a birational mapping. Recall that the \emph{cardinality} of the fiber of $\Phi$, which we denote by $\#(\Phi)$, is the number of points in the preimage of $\Phi({\bf q})$ with ${\bf q}\in {\Bbb C}^2$ a generic point. Then, the birationality of $\Phi$ is equivalent to $\#(\Phi)=1$ (see for instance Proposition 7.16 in \cite{Harris}), so we can check this condition by just picking a random point ${\bf q}$, and computing the number of points in the preimage of $\Phi({\bf q})$. Furthermore, we have the following lemma, inspired by \cite{PS04,PSV11}.

\begin{lemma}\label{card}
Let $\Phi:{\Bbb C}^2\to {\Bbb C}^n$ be a birational mapping. Then the set of points ${\bf q}\in {\Bbb C}^2$ such that $\#(\Phi({\bf q}))>1$ is included in an algebraic variety ${\mathcal V}\subset{\Bbb C}^2$ of dimension at most 1. 
\end{lemma}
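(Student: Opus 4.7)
The plan is to realize the bad locus as the first-coordinate projection of an algebraic correspondence in $\mathbb{C}^2\times\mathbb{C}^2$, and then use generic injectivity (which is equivalent to birationality here) to force the dimension bound.

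Writing $\Phi$ coordinatewise as $\Phi=(P_1/Q_1,\ldots,P_n/Q_n)$ in lowest terms, let $U\subset\mathbb{C}^2$ denote the open dense domain on which no $Q_i$ vanishes. Consider
\[
Z^\circ=\{(\mathbf{q}_1,\mathbf{q}_2)\in U\times U : \Phi(\mathbf{q}_1)=\Phi(\mathbf{q}_2)\},
\]
an algebraic subset of $U\times U$ cut out by the $n$ polynomial equations $P_i(\mathbf{q}_1)Q_i(\mathbf{q}_2)-P_i(\mathbf{q}_2)Q_i(\mathbf{q}_1)=0$. This set contains the diagonal $\Delta_U=\{(\mathbf{q},\mathbf{q}):\mathbf{q}\in U\}$, which is irreducible of dimension $2=\dim U$, so $\Delta_U$ is automatically an irreducible component of $Z^\circ$.

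The essential step is to show that $\Delta_U$ is the \emph{only} irreducible component of $Z^\circ$ that dominates $U$ under the first projection $\pi_1$. Suppose, for contradiction, that some other irreducible component $Z_i^\circ$ also dominated $U$. Then for $\mathbf{q}_1$ in an open dense subset of $U$, the fiber $\pi_1^{-1}(\mathbf{q}_1)\cap Z_i^\circ$ would be non-empty and, since $Z_i^\circ\neq\Delta_U$, would provide pairs $(\mathbf{q}_1,\mathbf{q}_2)$ with $\mathbf{q}_2\neq\mathbf{q}_1$ and $\Phi(\mathbf{q}_2)=\Phi(\mathbf{q}_1)$. But birationality, invoked just before the statement via Proposition 7.16 of \cite{Harris}, says $\#(\Phi)=1$; that is, $\Phi^{-1}(\Phi(\mathbf{q}_1))=\{\mathbf{q}_1\}$ on an open dense subset of $U$, giving the required contradiction. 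Hence every non-diagonal irreducible component $Z_i^\circ$ has $\pi_1(Z_i^\circ)$ contained in a proper algebraic subvariety of $U$, so its Zariski closure $\overline{\pi_1(Z_i^\circ)}\subset\mathbb{C}^2$ has dimension at most $1$.

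Since $\{\mathbf{q}\in U:\#(\Phi(\mathbf{q}))>1\}=\pi_1(Z^\circ\setminus\Delta_U)\subset\bigcup_i\overline{\pi_1(Z_i^\circ)}$, and since $\mathbb{C}^2\setminus U=\{Q_1\cdots Q_n=0\}$ is itself an algebraic subvariety of dimension at most $1$, the required variety can be taken as
\[
\mathcal{V}=\Bigl(\bigcup_i\overline{\pi_1(Z_i^\circ)}\Bigr)\cup\{Q_1\cdots Q_n=0\}\subset\mathbb{C}^2,
\]
a finite union of algebraic subsets, each of dimension at most $1$, which contains the locus named in the statement. The main difficulty lies in the uniqueness-of-dominant-component step above; once it is extracted from $\#(\Phi)=1$, what remains is routine projection-and-closure bookkeeping.
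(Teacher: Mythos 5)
Your proof is correct, but it takes a genuinely different route from the paper's. The paper exploits the rational inverse directly: writing $\Phi^{-1}=(A_1/B_1,\,A_2/B_2)$, it takes $\mathcal{V}$ to be the explicit algebraic curve where the composition $\Phi^{-1}\circ\Phi$ fails to be defined, namely the union of the zero sets of $(B_i\circ\Phi)(x_1,x_2)$ and of the denominators of $(A_i\circ\Phi)(x_1,x_2)$; off this curve the identity $\Phi^{-1}\circ\Phi=\mathrm{id}$ pins each fiber down to a single point. You instead work with the incidence correspondence $Z^\circ\subset U\times U$ and use dimension theory: generic injectivity forces every component other than the diagonal to project into a proper subvariety of $\mathbb{C}^2$. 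Your argument is more robust --- it never needs the explicit form of $\Phi^{-1}$, only the equivalence of birationality with $\#(\Phi)=1$ that the paper cites from Harris, and it would carry over essentially verbatim to show that for any generically finite $\Phi$ the locus where the fiber cardinality exceeds its generic value lies in a proper subvariety. The paper's argument, by contrast, produces $\mathcal{V}$ as an explicitly computable curve (its defining polynomials are obtained by composing denominators with $\Phi$), which fits the algorithmic spirit of the rest of the paper. One small point to tighten: the inference ``$\Delta_U$ is irreducible of dimension $2=\dim U$, so it is automatically an irreducible component of $Z^\circ$'' is not valid as stated, since an irreducible $2$-dimensional subset of the $4$-dimensional $U\times U$ could a priori be contained in a $3$-dimensional component of $Z^\circ$. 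That possibility is, however, excluded by the very argument you give next: a component of $Z^\circ$ of dimension at least $3$ containing $\Delta_U$ would dominate $U$ with positive-dimensional generic fibers, again contradicting $\#(\Phi)=1$. So the gap closes with the tool you already have in hand, and the rest of the projection-and-closure bookkeeping is fine.
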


\begin{proof} Let 
\[
\Phi(x_1,x_2)=(\Phi_1(x_1,x_2),\ldots,\Phi_n(x_1,x_n))=(y_1,\ldots,y_n).
\]
If $\Phi$ is birational, then $\Phi^{-1}$ exists and is rational, i.e.
\[
\Phi^{-1}(y_1,\ldots,y_n)=(\Psi_1(y_1,\ldots,y_n),\Psi_2(y_1,\ldots,y_n))
\]
where for $i=1,2$,
\[
\Psi_i(y_1,\ldots,y_n)=\dfrac{A_i(y_1,\ldots,y_n)}{B_i(y_1,\ldots,y_n)}
\]
with $A_i,B_i$ polynomials. Then the set of points ${\bf q}\in {\Bbb C}^2$ such that $\#(\Phi({\bf q}))>1$ is included in the set ${\mathcal V}\subset {\Bbb C}^2$ where $\Phi^{-1}$ is not defined, ${\mathcal V}$ being the union of the sets defined by $(B_i\circ \Phi)(x_1,x_2)=0$ with $i=1,2$, and $N_i(x_1,x_2)=0$, $i=1,2$, where $N_i$ is the denominator of $(A_i\circ \Phi)(x_1,x_2)$. Notice that ${\mathcal V}$ is an algebraic planar curve. 
\end{proof}

\begin{corollary}\label{prop}
Assume that $\Phi$ is a birational mapping, and $\xi$ is a meromorphic, not algebraic function. Then $\bfp(z)$ is invertible over $\bfC=\Phi({\mathcal G}_\xi)$, and the inverse $\bfp^{-1}$ has rational components.
\end{corollary}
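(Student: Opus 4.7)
My plan is to combine Lemma \ref{card} with the non-algebraicity of $\xi$ to show that $\bfp$ is generically injective, and then to read off the rational inverse directly from $\Phi^{-1}$.

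First I would invoke Lemma \ref{card} to obtain a planar algebraic variety $\mathcal{V}\subset \mathbb{C}^2$, of dimension at most $1$, outside of which $\Phi$ is injective with rational inverse $\Phi^{-1}(\bfy)=(\Psi_1(\bfy),\Psi_2(\bfy))$. The key observation is that the graph $\mathcal{G}_\xi$ cannot be contained in $\mathcal{V}$: if it were, then there would exist a nonzero polynomial $P(x_1,x_2)$ vanishing on $\mathcal{G}_\xi$, i.e. $P(z,\xi(z))\equiv 0$ on $\mathbb{C}$, forcing $\xi$ to be an algebraic function, contrary to hypothesis. Since $\mathcal{V}$ is defined by the vanishing of finitely many polynomials, the intersection $\mathcal{G}_\xi\cap \mathcal{V}$ corresponds to the zero set of a finite collection of non-identically-zero analytic functions of the form $P(z,\xi(z))$, and hence is a discrete (in particular, proper) subset of $\mathcal{G}_\xi$.

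Next I would conclude injectivity of $\bfp$ on the complement of a discrete set. Writing $\bfp=\Phi\circ \Pi$, the map $\Pi$ is injective onto $\mathcal{G}_\xi$ by construction, and $\Phi$ is injective on $\mathcal{G}_\xi\setminus \mathcal{V}$ by Lemma \ref{card}. Thus $\bfp$ is injective outside the discrete set $\Pi^{-1}(\mathcal{V}\cap \mathcal{G}_\xi)$, which is exactly what we mean by $\bfp$ being invertible over $\bfC$.

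Finally, to exhibit the rational inverse, note that for any $\bfy\in \bfC$ away from the discrete exceptional set we have $\bfy=\Phi(z,\xi(z))$ with $(z,\xi(z))\not\in \mathcal{V}$, so $(z,\xi(z))=\Phi^{-1}(\bfy)=(\Psi_1(\bfy),\Psi_2(\bfy))$, and in particular $z=\Psi_1(\bfy)$. Hence $\bfp^{-1}=\Psi_1$, which has rational components as required. The only non-routine point in this argument is the step asserting that $\mathcal{G}_\xi\not\subseteq \mathcal{V}$, which is exactly where the non-algebraicity hypothesis on $\xi$ enters; everything else is a formal unwinding of Lemma \ref{card}.
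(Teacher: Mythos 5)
Your argument is correct and follows essentially the same route as the paper: combine Lemma \ref{card} with the non-algebraicity of $\xi$ to conclude that $\mathcal{G}_\xi\cap\mathcal{V}$ is discrete, deduce that $\Phi$ is injective at a generic point of $\mathcal{G}_\xi$, and read off $\bfp^{-1}=\Pi^{-1}\circ\Phi^{-1}$ (your $\Psi_1$) as a rational map. The only cosmetic difference is that where you prove discreteness directly from the zero sets of the non-identically-zero meromorphic functions $P(z,\xi(z))$, the paper cites the Identity Theorem for the same conclusion.
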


\begin{proof} Since by assumption $\#(\Phi)=1$, $\Phi^{-1}$ exists and is rational. Next from Lemma \ref{card} we have that $\#(\Phi)$ is constant except perhaps for the points of an algebraic variety ${\mathcal V}\subset {\Bbb C}^2$ of dimension at most one. Since $\xi$ is not an algebraic function ${\mathcal G}_\xi$ is not an algebraic curve. Therefore, by the Identity Theorem (see Theorem 3.1.9 in \cite{Jong}) ${\mathcal G}_\xi\cap {\mathcal V}$ is either finite, or infinite but without any accumulation point. Thus, for a generic point ${\bf q}\in {\mathcal G}_\xi$ we get that the cardinality of $\Phi({\bf q})$ is $1$. Therefore $\Phi^{-1}$ is well-defined for almost all points in $\Phi({\mathcal G}_\xi)$, and $\bfp^{-1}=\Pi^{-1}\circ\Phi^{-1}$. Since $\Phi^{-1},\Pi^{-1}$ are rational, $\bfp^{-1}$ is rational as well. 
\end{proof}

\begin{example}\label{somecurves}
To illustrate the assumptions that we need, we provide now some examples of plane and space curves satisfying these requirements. 
\begin{itemize}
\item [(1)] {\it Catenary.} Consider the curve $\bfC$ parametrized by 
\[
(z,\mbox{cosh}(z)),
\]
where $\mbox{cosh}$ denotes the hyperbolic cosine (see Fig. \ref{analytic}, left). Here $\xi(z)=\mbox{cosh}(z)$ and $\Phi(x,y)=(x,y)$, which is clearly birational. 
\item [(2)] {\it Image of the graph of the exponential curve under an inversion.} Let the curve $\bfC$ (see Fig. \ref{analytic}, middle) be parametrized by 
\[
\left(\dfrac{z}{z^2+e^{2z}},\dfrac{e^z}{z^2+e^{2z}}\right)
\]
Here $\xi(z)=e^z$ and 
\[
\Phi(x,y)=\left(\dfrac{x}{x^2+y^2},\dfrac{y}{x^2+y^2}\right),
\]
which is an inversion from the origin, and therefore a birational mapping. 
\item [(3)] {\it 3D spiral.} Consider the curve $\bfC$ parametrized by $(z\cos(z),z\sin(z),z)$, which is a 3D spiral (see Fig. \ref{analytic}, right). Writing 
\[
\cos(z)=\dfrac{e^{2{\bf i}z}+1}{2e^{{\bf i}z}},\mbox{ }\sin(z)=\dfrac{e^{2{\bf i}z}-1}{2{\bf i}e^{{\bf i}z}},\mbox{ }{\bf i}^2=-1,
\]
the parametrization of $\bfC$ can be expressed as 
\[
\left(z\dfrac{e^{2{\bf i}z}+1}{2e^{{\bf i}z}},z\dfrac{e^{2{\bf i}z}-1}{2{\bf i}e^{{\bf i}z}},z\right).
\]
Then, $\xi(z)=e^{{\bf i}z}$, and 
\[
\Phi(x,y)=\left(y\dfrac{x^2+1}{2x},y\dfrac{x^2-1}{2{\bf i}x},y\right),
\]
which is a birational mapping.
\end{itemize} 
\end{example}

\begin{figure}
\begin{center}
\centerline{$\begin{array}{ccc}
\includegraphics[scale=0.35]{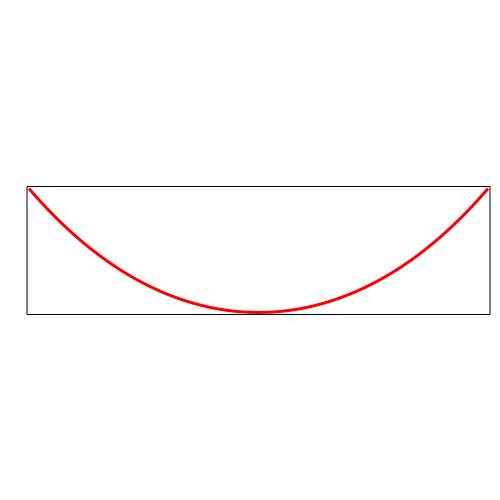} &\includegraphics[scale=0.25]{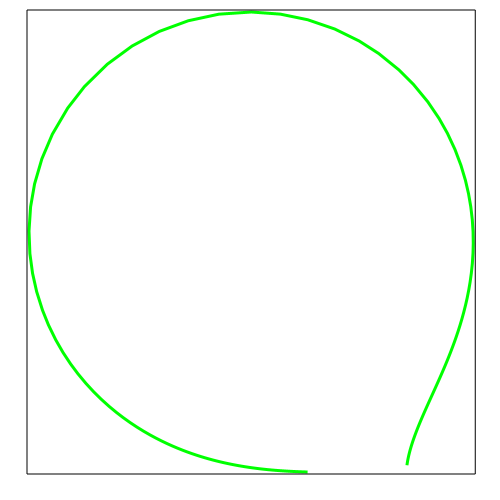} &
\includegraphics[scale=0.25]{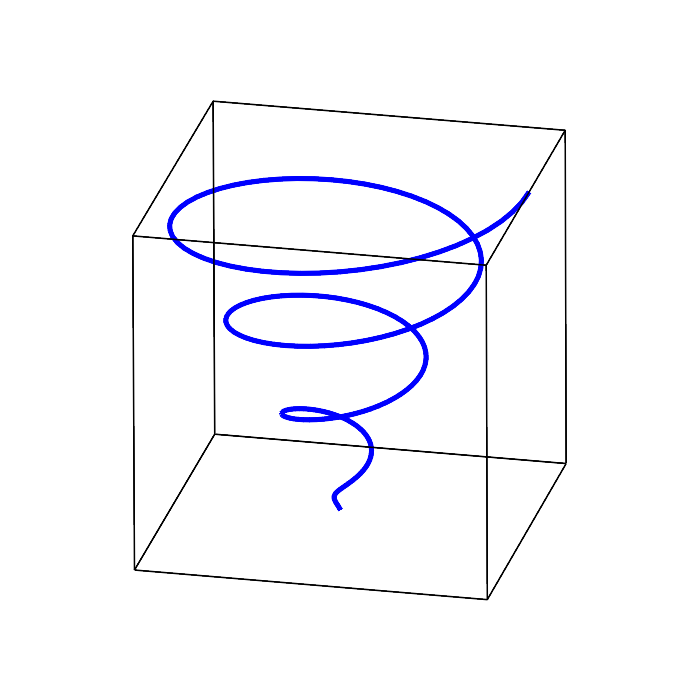} 
\end{array}$}
\end{center}
\caption{Some curves with meromorphic parametrizations, and rational inverses}\label{analytic}
\end{figure}

Even though for technical reasons we will assume that the parameter space of the curve $\bfC$ parametrized by Eq. \eqref{thex} is ${\Bbb C}$, we will mostly work with real curves, i.e. curves with infinitely many real points. Next, given two curves $\bfC_1,\bfC_2\subset{\Bbb C}^n$ parametrized by $\bfp(z),\bfq(z)$ as in Eq. \eqref{thex}, we say that $\bfC_1,\bfC_2$ are \emph{affinely equivalent} if there exists a mapping $f:{\Bbb C}^n\to {\Bbb C}^n$, $f(\bfx)=A\bfx+\bfb$ with $A\in {\mathcal M}_{n\times n}({\Bbb C})$, i.e. $A$ is an $n\times n$ matrix (in general, over the complex), $A$ non-singular, $\bfb\in {\Bbb C}^n$, such that $f(\bfC_1)=\bfC_2$; furthermore, we say that $f$ is an \emph{affine equivalence} between $\bfC_1,\bfC_2$. We are interested in real affine equivalences, so in our case we will be mostly looking for $A\in {\mathcal M}_{n\times n}({\Bbb R})$, $\bfb\in {\Bbb R}^n$. If $\bfC_1=\bfC_2=\bfC$ and $A$ is \emph{orthogonal}, i.e. $A^TA=I$, with $I$ the identity matrix, we say that $f$ is a \emph{symmetry} of $\bfC$. 

\vspace{0.3 cm}
Now we are ready to state the problem that we want to solve. 

\vspace{0.3 cm}
\noindent
\underline{\bf Problem:} Given two curves $\bfC_1,\bfC_2\subset {\Bbb C}^n$, not contained in hyperplanes, parametrized by mappings $\bfp(z),\bfq(z)$ as in Eq. \eqref{thex} with meromorphic components, admitting rational inverses $\bfp^{-1},\bfq^{-1}$, compute the affine equivalences, if any, between $\bfC_1,\bfC_2$. 

\vspace{0.3 cm}
In order to solve this problem, we will make use of the following result, which corresponds to a similar result used in \cite{Gozutok,HJ18}, adapted to our case. We recall here that a \emph{M\"obius transformation} is a transformation
\begin{equation*}
\varphi:{\Bbb C}\to{\Bbb C},\quad \varphi(z)=\dfrac{az+b}{cz+d},\quad ad-bc\neq 0.
\end{equation*}

\begin{theorem}\label{th-ref}
Let $\bfC_1,\bfC_2\subset {\Bbb C}^n$ be two parametric curves defined by $\bfp(z),\bfq(z)$ as in Eq. \eqref{thex}, where $\bfp(z),\bfq(z)$ are mappings with meromorphic components, admitting rational inverses $\bfp^{-1},\bfq^{-1}$. If $f(\bfx)=A\bfx+\bfb$ is an affine equivalence between $\bfC_1,\bfC_2$ then there exists a M\"obius transformation $\varphi(z)$ satisfying that 
\begin{equation}\label{fund}
f\circ \bfp=\bfq\circ \varphi,
\end{equation}
i.e. making commutative the following diagram:
\begin{equation}\label{eq:fundamentaldiagram}
\xymatrix{
\bfC_1 \ar[r]^{f} & \bfC_2 \\
{\Bbb C} \ar@{-->}[u]^{\bfp} \ar@{-->}[r]_{\varphi} & {\Bbb C} \ar@{-->}[u]_{\bfq}
}
\end{equation}
\end{theorem}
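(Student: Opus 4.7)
The plan is to construct $\varphi$ directly by inverting the diagram: set
\[
\varphi := \bfq^{-1}\circ f\circ \bfp:\CC\to\CC.
\]
I would then show that this $\varphi$ is meromorphic, that it admits a meromorphic inverse, and finally that the only bi-meromorphic self-maps of $\CC$ are M\"obius transformations. With this definition, the desired identity $f\circ \bfp=\bfq\circ \varphi$ is automatic: since $f(\bfC_1)=\bfC_2$, the image of $f\circ \bfp$ lies in $\bfC_2$, and on $\bfC_2$ the composition $\bfq\circ \bfq^{-1}$ acts as the identity by the hypothesis that $\bfq$ admits a rational inverse.

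The first step, that $\varphi$ is meromorphic, is essentially a calculation. The components of $f\circ \bfp$ are meromorphic functions of $z$, being affine-linear combinations of the meromorphic components of $\bfp$; while $\bfq^{-1}$ has rational components by hypothesis. Substituting meromorphic functions into rational expressions yields a meromorphic function, so $\varphi$ is meromorphic on $\CC$. By exactly the same reasoning, the map
\[
\psi := \bfp^{-1}\circ f^{-1}\circ \bfq
\]
is meromorphic, and a direct cancellation shows that $\varphi\circ \psi$ and $\psi\circ \varphi$ both reduce to the identity on a dense open subset of $\CC$. Hence $\varphi$ is a bi-meromorphic self-map of $\CC$.

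The hard part, which I expect to be the main obstacle, is to conclude that any bi-meromorphic self-map of $\CC$ is a M\"obius transformation. My plan is to invoke the Great Picard Theorem: if $\varphi$ were not a rational function of $z$, then $\infty$ would be an essential singularity of $\varphi$ viewed as a map to $\PP^1$, and the meromorphic version of Picard's theorem would force $\varphi$ to attain all but at most two values of $\PP^1$ infinitely often in every punctured neighborhood of $\infty$. This contradicts the injectivity of $\varphi$ furnished by the inverse $\psi$. Consequently $\varphi$ extends to a holomorphic self-map of $\PP^1$, i.e.\ to a rational function; and since it has a rational inverse, the multiplicativity of degree under composition forces $\deg(\varphi)\cdot\deg(\psi)=1$, so $\deg(\varphi)=1$. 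Thus $\varphi(z)=(az+b)/(cz+d)$ with $ad-bc\neq 0$, which is precisely a M\"obius transformation, completing the argument.
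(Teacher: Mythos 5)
Your proposal is correct and follows essentially the same route as the paper: both define $\varphi=\bfq^{-1}\circ f\circ\bfp$ and $\varphi^{-1}=\bfp^{-1}\circ f^{-1}\circ\bfq$, check that each is meromorphic because the curve inverses are rational, and conclude that a bi-meromorphic self-map of $\CC$ must be a M\"obius transformation. The only difference is that the paper cites a reference for this last classification, whereas you supply the (standard and correct) Picard-plus-degree argument yourself.
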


\begin{proof} Since $\bfq^{-1}$ exists, $\varphi=\bfq^{-1}\circ f \circ \bfq$ is well-defined. Furthermore, since $\bfq^{-1}$ is rational, $\bfq^{-1}\circ f$ is also a rational function and therefore $\varphi=(\bfq^{-1}\circ f)\circ \bfp$ is meromorphic. Since $\bfp$ exists and is rational, $\varphi^{-1}=\bfp^{-1}\circ f^{-1}\circ \bfq$ is also meromorphic, so $\varphi$ is a bi-meromorphic function, and therefore it must be a M\"obius transformation (see Remark 2 in \cite{AM22}).
\end{proof} 

\begin{remark} 
Theorem \ref{th-ref} also works with meromorphic parametrizations $\bfp,\bfq$ admitting \emph{global} meromorphic inverses. However, guaranteeing the existence of a global meromorphic inverse is a really hard problem. This is the reason why we restrict ourselves to 
parametrizations, rational or not rational, where this condition is easy to check. Notice that the parametrizations we work with here have rational inverses, so certainly they have global meromorphic inverses. 
\end{remark}

\subsection{Additional tools} \label{addtools}

In this subsection we recall two notions that we will be using later in the paper. The first one is the \emph{Schwartzian derivative}: given a holomorphic function $f:{\Bbb C}\to {\Bbb C}$, the Schwartzian derivative \cite{OT09} $S(f)$ of $f$ is 
\[
S(f)(z)=\dfrac{f'''(z)}{f'(z)}-\dfrac{3}{2}\left(\dfrac{f''(z)}{f'(z)}\right)^2.
\]
The Schwartzian derivative of any M\"obius transformation is identically zero. The following lemma is a consequence of this. 

\begin{lemma}\label{schwa}
Let $\omega:=\varphi(z)$ a M\"obius transformation, and let $\omega^{(k)}$ denote the $k$-th derivative of $\omega$ with respect to $k$. For $k\geq 3$, 
\begin{equation}\label{schwa3}
\omega^{(k)}=\dfrac{k!}{2^{k-1}}\dfrac{(\omega'')^{k-1}}{(\omega')^{k-2}}.
\end{equation}
\end{lemma}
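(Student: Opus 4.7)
My plan is to prove Lemma \ref{schwa} by induction on $k$, using the vanishing of the Schwartzian derivative as the base case. Specifically, because $S(\omega)\equiv 0$, we obtain directly
\[
\omega''' = \frac{3}{2}\frac{(\omega'')^2}{\omega'},
\]
which is exactly formula \eqref{schwa3} for $k=3$, since $\frac{3!}{2^{2}}=\frac{3}{2}$. This identity is the engine that makes the induction close, as it lets us eliminate any occurrence of $\omega'''$ in favor of lower derivatives.

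For the inductive step, I would assume \eqref{schwa3} holds at level $k$ and differentiate both sides. Applying the quotient rule to $\frac{(\omega'')^{k-1}}{(\omega')^{k-2}}$ produces two terms:
\[
\frac{d}{dz}\!\left(\frac{(\omega'')^{k-1}}{(\omega')^{k-2}}\right)
=\frac{(k-1)(\omega'')^{k-2}\omega'''}{(\omega')^{k-2}}-\frac{(k-2)(\omega'')^{k}}{(\omega')^{k-1}}.
\]
Substituting the base-case identity $\omega'''=\tfrac{3}{2}(\omega'')^{2}/\omega'$ into the first term turns it into a multiple of $(\omega'')^{k}/(\omega')^{k-1}$, so both terms combine into a single expression of the right shape. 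The combinatorial coefficient works out to $\frac{3(k-1)-2(k-2)}{2}=\frac{k+1}{2}$, and multiplying by the prefactor $\frac{k!}{2^{k-1}}$ gives $\frac{(k+1)!}{2^{k}}$, exactly the constant predicted by \eqref{schwa3} at level $k+1$.

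There is no real obstacle; the only thing to be careful with is the bookkeeping of exponents in the quotient-rule step and making sure the coefficient simplification produces $(k+1)!/2^{k}$ rather than an off-by-one variant. An alternative (and, in my view, slightly more elegant) route would be to write $\omega(z)=\frac{az+b}{cz+d}$ explicitly, compute $\omega'=(ad-bc)/(cz+d)^{2}$ and $\omega''=-2c(ad-bc)/(cz+d)^{3}$, and then verify \eqref{schwa3} by direct substitution after computing $\omega^{(k)}$ by induction as $(-1)^{k-1}k!\,c^{k-1}(ad-bc)/(cz+d)^{k+1}$; but this bypasses the Schwartzian derivative and thus does not fit the narrative of the surrounding subsection. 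For that reason I would keep the Schwartzian-based inductive argument, which is shorter and uses only the one fact already highlighted before the lemma.
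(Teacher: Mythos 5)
Your proof is correct and follows the same route as the paper: derive $\omega'''=\tfrac{3}{2}(\omega'')^2/\omega'$ from the vanishing Schwartzian as the base case, then induct on $k$ (the paper leaves the inductive step as an exercise, whereas you carry out the quotient-rule computation explicitly and the coefficient $\tfrac{3(k-1)-2(k-2)}{2}=\tfrac{k+1}{2}$ indeed yields $\tfrac{(k+1)!}{2^{k}}$). No gaps.
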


\begin{proof} Since the Schwartzian derivative of a M\"obius transformation is identically zero, we get that 
\[
\omega'''=\dfrac{3}{2}\dfrac{(\omega'')^2}{\omega'},
\]
which corresponds to Eq. \eqref{schwa3} for $k=3$. Then the result follows by induction on $k$. 
\end{proof}

The second tool that we will need is \emph{Fa\`a di Bruno's formula} \cite{Jonson} for the derivatives of high order of a composite function. Given a vector function $\bfu:=\bfu(z)$ and a scalar function $\omega:=\omega(z)$, Fa\`a di Bruno's formula provides the derivatives of order $k\geq 1$ of the composite function $\bfu(\omega)$ with respect to $z$. Although there are other formulations, we will use the expression 
\begin{equation}\label{bell}
\dfrac{d^k(\bfu(\omega))}{dz^k}=\sum_{m=1}^k \bfu^{(m)}(\omega)B_{k,m}(\omega',\omega'',\ldots,\omega^{(k+1-m)}),
\end{equation}
where the $B_{k,m}$ are the incomplete (or partial) \emph{Bell polynomials} \cite{K21}, well-known in combinatorics,
\begin{equation}\label{bell2}
B_{k,m}(x_1,x_2,\ldots,x_{k+1-m})=\sum \dfrac{k!}{\ell_1! \cdots \ell_{k+1-m}!}\prod_{i=1}^{k+1-m} \left(\dfrac{x_i}{i!}\right)^{\ell_i},
\end{equation}
where the sum is taken over all sequences $\ell_1,\ell_2,\ldots,\ell_{k+1-m}$ of non-negative integers such that 
\begin{equation}\label{someq1}
\sum_{i=1}^{k+1-m}\ell_i=m,\mbox{ }\sum_{i=1}^{k+1-m}i\ell_i=k.
\end{equation}
In particular, 
\begin{equation}\label{someq2}
B_{k,k}(x_1)=(x_1)^k,
\end{equation}
and we will assume the convention that $B_{k,m}=0$ when $k<m$.

%The formula in Eq. \eqref{bell} can be easily computed, for instance, with the help of the command {\tt IncompleteBellB} in the computer algebra Maple. For instance, 
%\begin{equation*}
%\begin{split}
%\dfrac{d^3(\bfu(\omega))}{dz^3}&=\sum_{m=1}^{3}{\bfu^{(m)}(\omega)B_{3,m}(\omega',\ldots,\omega^{(4-m)})}=\\
%&=\bfu'(\omega)B_{3,1}(\omega',\omega'',\omega''')+\bfu''(\omega)B_{3,2}(\omega',\omega'')+\bfu'''(s)B_{3,3}(\omega')=\\
%&=\omega'''\bfu'(\omega)+3\omega'\omega''\bfu''(\omega)+s'^3\bfu'''(s).
%\end{split}
%\end{equation*}

\section{Development of the method, algorithm and examples}\label{firstsep}

\subsection{Overall strategy and first step}\label{overall}

We want to exploit Eq. \eqref{fund} to first find the M\"obius transformation $\varphi$, if any, and then derive $f$ from $\varphi$. If we expand Eq. \eqref{fund}, we get
\begin{equation}\label{fund2}
A\bfp(z)+\bfb=(\bfq\circ \varphi)(z).
\end{equation}
Our overall strategy will consist of three steps, that we will refer to as steps (i), (ii), (iii):

\begin{itemize}
\item [(i)] {\it Find initial invariants:} we start by constructing certain functions $I_1,\ldots,I_n$ satisfying that $I_i(\bfp)=I_i(\bfq\circ \varphi)$, which are \emph{rational} in the sense that they are rational functions of $\bfp$ and its derivatives. Since by Eq. \eqref{fund2} we observe that $\bfq\circ \varphi$ is the image of $\bfp$ under an affine mapping $f(\bfx)=A\bfx+\bfb$, we say that $I_1,\ldots,I_n$ are \emph{affine invariants}, i.e. functions depending on a parametrization (and its derivatives) that stay the same when an affine transformation is applied. 

\item [(ii)] {\it Find M\"obius-commuting invariants:} we say that a function $F$ depending on a parametrization $\bfu=\bfu(z)$ and its derivatives is \emph{M\"obius-commuting} if for any M\"obius function we have 
\[F(\bfu\circ \varphi)=F(\bfu)\circ \varphi.\]
The functions $I_i$ found in step (i) are not, in general, M\"obius-commuting. Thus, in a second step we will compute M\"obius-commuting functions $F_1,\ldots,F_{n-1}$ from the $I_i$, also rational. The $F_j$ not only satify that $F_j(\bfp)=F_j(\bfq \circ \varphi)$ for $j=1,\ldots,n-1$, but they also satisfy that $F_j(\bfq\circ \varphi)=F_j(\bfq)\circ \varphi$. In turn, for $j=1,\ldots,n-1$ we have 
\[
F_j(\bfp)=F_j(\bfq)\circ \varphi.
\]
Notice that while we have $n$ initial invariants $I_i$, we have $n-1$ M\"obius-commuting invariants. 

\item [(iii)] {\it Compute $\varphi$ using bivariate factoring, and derive $f$ from $\varphi$:} setting $\omega:=\varphi(z)$, the equalities $F_j(\bfp)=F_j(\bfq)\circ \varphi$, after clearing denominators, are translated into $n-1$ conditions $M_j(z,\omega)=0$, with $j=1,\ldots,n-1$. Then the M\"obius function $\varphi$ corresponds to a common factor of all the $M_j$, and the affine equivalence itself, $f(\bfx)=A\bfx+\bfb$, follows from Eq. \eqref{fund2}. 
\end{itemize}

In this subsection we will present step (i); the remaining steps will be described in the next section. Also, in the rest of the paper we will use the notation $[\bfw_1,\cdots,\bfw_n]$ for an $n\times n$ matrix whose columns are $\bfw_1,\ldots,\bfw_n\in {\Bbb C}^n$, and $\Vert \bfw_1,\cdots,\bfw_n\Vert$ for the determinant of the matrix $[\bfw_1,\cdots,\bfw_n]$. 

\vspace{0.3 cm}
The description of step (i) is analogous to Section 3.2 in \cite{Gozutok}. Thus, here we focus on the main ideas, and refer the interested reader to \cite{Gozutok} for details and proofs. Going back to Eq. \eqref{fund}, let us write $\bfu:=\bfp(z)$, $\bfv:=(\bfq\circ \varphi)(z)$, so that Eq. \eqref{fund2} becomes simply $A\bfu+\bfb=\bfv$. Repeatedly differentiating this equation with respect to $z$ yields $AD(\bfu)=D(\bfv)$ where 
\[
D(\bfu)=\left[\bfu', \bfu'',\cdots ,\bfu^{(n)}\right],\mbox{ }D(\bfv)=\left[\bfv', \bfv'',\cdots ,\bfv^{(n)}\right],
\]
i.e. $D(\bfu),D(\bfv)$ are matrices whose columns consist of the first $n$ derivatives of $\bfu,\bfv$ with respect to $z$. Whenever $\bfp,\bfq$ and therefore $\bfu,\bfv$ are not contained in hyperplanes, $D(\bfu),D(\bfv)$ are invertible \cite{Sulanke}. Thus, we can write $A=D(\bfv)(D(\bfu))^{-1}$. Differentiating this equality with respect to $z$, and taking into account that $A$ is a constant matrix, we get that \[\dfrac{d(D(\bfv)(D(\bfu))^{-1})}{dz}=0.\]Expanding the derivative in the left-hand side of the above equation we arrive at
\begin{equation}\label{UVequal}
(D(\bfu))^{-1}\dfrac{dD(\bfu)}{dz}=(D(\bfv))^{-1}\dfrac{dD(\bfv)}{dz}.
\end{equation}

Denoting
\[
U=(D(\bfu))^{-1}\dfrac{dD(u)}{dz},\mbox{ }V=(D(\bfv))^{-1}\dfrac{dD(v)}{dz},
\]
one can check that 
\begin{equation}\label{UV}
\begin{array}{cc}
U=\begin{bmatrix}
0 & 0 & \cdots & 0 & \dfrac{\Vert \bfu^{(n+1)},\bfu'',\cdots,\bfu^{(n)}\Vert}{\Vert\bfu',\bfu'',\cdots,\bfu^{(n)}\Vert} \\
0 & 1 & \cdots & 0 & \dfrac{\Vert \bfu',\bfu^{(n+1)},\cdots,\bfu^{(n)}\Vert}{\Vert\bfu',\bfu'',\cdots,\bfu^{(n)}\Vert}\\
\vdots & \vdots &\ddots & \vdots & \vdots\\
0 & 0 & \cdots & 1 & \dfrac{\Vert \bfu',\bfu'',\cdots,\bfu^{(n+1)}\Vert}{\Vert\bfu',\bfu'',\cdots,\bfu^{(n)}\Vert}
\end{bmatrix},
V=\begin{bmatrix}
0 & 0 & \cdots & 0 & \dfrac{\Vert \bfv^{(n+1)},\bfv'',\cdots,\bfv^{(n)}\Vert}{\Vert\bfv',\bfv'',\cdots,\bfv^{(n)}\Vert} \\
0 & 1 & \cdots & 0 & \dfrac{\Vert \bfv',\bfv^{(n+1)},\cdots,\bfv^{(n)}\Vert}{\Vert\bfv',\bfv'',\cdots,\bfv^{(n)}\Vert}\\
\vdots & \vdots &\ddots & \vdots & \vdots\\
0 & 0 & \cdots & 1 & \dfrac{\Vert \bfv',\bfv'',\cdots,\bfv^{(n+1)}\Vert}{\Vert\bfv',\bfv'',\cdots,\bfv^{(n)}\Vert}
\end{bmatrix}.
\end{array}
\end{equation}

Next let us define 
\begin{equation}\label{As}
A_i(\bfu):=\Vert \bfu',\cdots,\bfu^{(i-1)},\bfu^{(n+1)},\bfu^{i+1},\cdots, \bfu^{(n)}\Vert, \mbox{ }\Delta(\bm{u}):=\Vert \bm{u}',\bm{u}'',\cdots,\bfu^{n}\Vert.
\end{equation}
Thus, $A_i(\bfu)$ is the result of replacing $\bfu^{(i)}$ in $\Vert \bfu' \bfu'' \cdots \bfu^{(n)}\Vert$ by $\bfu^{(n+1)}$. Finally, for $i=1,\ldots,n$, let
\begin{equation}\label{first-inv}
I_i(\bm{u}):=\dfrac{A_i(\bm{u})}{\Delta(\bm{u})},
\end{equation}
which correspond to the entries of the last column of $U$; notice that whenever $\bfu,\bfv$ are not contained in hyperplanes $\Delta(\bfu)$ is not identically zero \cite{Sulanke}, so the $I_i$ are well defined. 

\vspace{0.3 cm}
By Eq. \eqref{UVequal}, $U,V$ are equal and therefore their last columns coincide. Thus, $I_i(\bfu)=I_i(\bfv)$ for $i=1,\ldots,n$, i.e. $I_i(\bfp)=I_i(\bfq\circ \varphi)$, which by Theorem \ref{th-ref} is a necessary condition for affine equivalence. The following result, analogous to Theorem 7 in \cite{Gozutok} and which can be proved, using Theorem \ref{th-ref}, in a similar way, shows that this condition is also sufficient.

\begin{theorem}\label{teo29*}
Let $\bm{C}_1,\bm{C}_2\subset {\Bbb C}^n$ be two curves, not contained in a hyperplane, parametrized by mappings $\bm{p},\bm{q}$ with meromorphic components, admitting rational inverses. If $\bm{C}_1,\bm{C}_2$ are affinely equivalent then there exists a M\"obius transformation $\varphi$ such that 
\begin{equation}\label{Ipqmob}
I_i(\bfp)=I_i(\bfq\circ \varphi)
\end{equation}
for $i=1,\ldots,n$. 
\end{theorem}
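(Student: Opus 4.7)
The plan is to invoke Theorem \ref{th-ref} to produce a Möbius transformation associated with the assumed affine equivalence, and then derive $I_i(\bfp)=I_i(\bfq\circ\varphi)$ by essentially the same matrix manipulation that motivated the definition of the $I_i$ in the first place.

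First I would apply Theorem \ref{th-ref}: since $\bfC_1,\bfC_2$ are affinely equivalent via some $f(\bfx)=A\bfx+\bfb$, there is a Möbius transformation $\varphi$ with $f\circ\bfp=\bfq\circ\varphi$. Setting $\bfu:=\bfp(z)$ and $\bfv:=(\bfq\circ\varphi)(z)$, this reads $A\bfu+\bfb=\bfv$. Differentiating $k$ times in $z$, the translation drops out and we obtain $A\bfu^{(k)}=\bfv^{(k)}$ for every $k\ge 1$, so that $A\,D(\bfu)=D(\bfv)$. Because $\bfC_1,\bfC_2$ are not contained in any hyperplane, both $D(\bfu)$ and $D(\bfv)$ are invertible (by the reference to \cite{Sulanke} already used above), so $A=D(\bfv)(D(\bfu))^{-1}$.

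Next I would use the constancy of $A$. Differentiating $D(\bfv)(D(\bfu))^{-1}=A$ with respect to $z$ and using $\frac{d}{dz}(D(\bfu))^{-1}=-(D(\bfu))^{-1}\frac{dD(\bfu)}{dz}(D(\bfu))^{-1}$ gives, after right-multiplication by $D(\bfu)$ and left-multiplication by $(D(\bfv))^{-1}$, exactly Eq.~\eqref{UVequal}. Hence the matrices $U$ and $V$ of Eq.~\eqref{UV} coincide; in particular their last columns coincide, which is precisely the system of identities
\[
I_i(\bfu)=I_i(\bfv),\qquad i=1,\ldots,n,
\]
i.e.\ $I_i(\bfp)=I_i(\bfq\circ\varphi)$ as required.

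The only non-trivial step is verifying the explicit form of $U$ (and $V$) given in Eq.~\eqref{UV}, since this is what identifies the last column of $U$ with the tuple $(I_1(\bfu),\ldots,I_n(\bfu))$ defined by Eq.~\eqref{first-inv}. I would treat this as a Cramer's-rule computation: the columns of $(D(\bfu))^{-1}\,\frac{dD(\bfu)}{dz}$ express each $\bfu^{(j+1)}$ in the basis $\bfu',\ldots,\bfu^{(n)}$ of $\CC^n$, and for $j<n$ the vector $\bfu^{(j+1)}$ is itself already one of the basis vectors, giving the shift pattern with a single $1$ in rows $1,\ldots,n-1$; only the last column, expressing $\bfu^{(n+1)}$ in that basis, produces the determinantal ratios $A_i(\bfu)/\Delta(\bfu)=I_i(\bfu)$. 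This reduces the whole theorem to the definitional identity $I_i(\bfu)=I_i(\bfv)$ read off coordinate-wise from $U=V$. Since the authors say this result is ``analogous to Theorem 7 in \cite{Gozutok}'' and can be proved in a similar way, I expect this is the extent of the argument they have in mind, and the main work is the matrix bookkeeping just described rather than any new geometric input.
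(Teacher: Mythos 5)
Your proposal is correct and follows exactly the route the paper intends: the paper gives no separate proof of Theorem~\ref{teo29*} but points to Theorem~\ref{th-ref} together with the derivation of Eq.~\eqref{UVequal} and the matrices $U,V$ carried out in Subsection~\ref{overall}, which is precisely the argument you reconstruct (including the Cramer's-rule verification of Eq.~\eqref{UV} that the paper leaves as ``one can check''). No gaps.
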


In order to carry out step (ii), which will be addressed in the next section, we need an auxiliary invariant, $I_0$, defined as
\begin{equation}\label{I3}
I_{0}(\bfu):=\dfrac{\Vert \bfu',\cdots,\bfu^{(n-1)},\bfu^{(n+2)}\Vert}{\Vert \bfu',\bfu'',\cdots,\bfu^{(n)}\Vert}.
\end{equation} 

The following lemma proves that $I_{0}$ lies in the differential field spanned by $I_1,\ldots,I_n$. The proof of this lemma is provided in Appendix I. 

\begin{lemma}\label{lemmaI0}
$I_{0}=\dfrac{dI_n}{dz}+I_{n-1}+I_n^2$.  
\end{lemma}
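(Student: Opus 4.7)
The plan is to compute both $A_n'=\dfrac{d}{dz}\Vert\bfu',\ldots,\bfu^{(n-1)},\bfu^{(n+1)}\Vert$ and $\Delta'=\dfrac{d}{dz}\Vert\bfu',\bfu'',\ldots,\bfu^{(n)}\Vert$ using the multilinearity of the determinant, expressing each as a sum of $n$ determinants where one column has been differentiated. The key observation is that most of these terms will vanish due to repeated columns, so only a very short expression survives.

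First I would handle $\Delta'$. Differentiating the $j$-th column $\bfu^{(j)}$ yields $\bfu^{(j+1)}$, which for $j=1,\ldots,n-1$ already appears as the $(j+1)$-st column of $\Delta$, making that determinant vanish. The only surviving term is $j=n$, which produces $\Vert\bfu',\ldots,\bfu^{(n-1)},\bfu^{(n+1)}\Vert=A_n(\bfu)$. Hence $\Delta'=A_n$, i.e.\ $\Delta'/\Delta=I_n$.

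Next I would apply the same strategy to $A_n'$. Again most terms collapse: differentiating the $j$-th column (for $j=1,\ldots,n-2$) yields $\bfu^{(j+1)}$ while the adjacent column is still $\bfu^{(j+1)}$, so the determinant vanishes. Only two contributions survive:
\begin{itemize}
\item Differentiating column $n-1$ replaces $\bfu^{(n-1)}$ by $\bfu^{(n)}$, yielding $\Vert\bfu',\ldots,\bfu^{(n-2)},\bfu^{(n)},\bfu^{(n+1)}\Vert$, which equals $-A_{n-1}(\bfu)$ after swapping the last two columns to match the definition of $A_{n-1}$.
\item Differentiating the last column replaces $\bfu^{(n+1)}$ by $\bfu^{(n+2)}$, yielding $\Vert\bfu',\ldots,\bfu^{(n-1)},\bfu^{(n+2)}\Vert$, the numerator of $I_0(\bfu)$.
\end{itemize}
Dividing by $\Delta$ gives $A_n'/\Delta=I_0-I_{n-1}$.

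Finally I would combine: from $I_n=A_n/\Delta$ and the quotient rule,
\begin{equation*}
\frac{dI_n}{dz}=\frac{A_n'}{\Delta}-\frac{A_n}{\Delta}\cdot\frac{\Delta'}{\Delta}=(I_0-I_{n-1})-I_n\cdot I_n,
\end{equation*}
which rearranges to the claimed identity $I_0=\dfrac{dI_n}{dz}+I_{n-1}+I_n^2$. The only step requiring care is the sign convention in identifying the $j=n-1$ term with $-A_{n-1}$; everything else is bookkeeping with the multilinear expansion of the determinant.
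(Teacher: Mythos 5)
Your proposal is correct and follows essentially the same route as the paper: apply the quotient rule to $I_n=A_n/\Delta$, use multilinearity of the determinant to see that $\Delta'=A_n$ and that only the two terms $-A_{n-1}$ and $\Vert\bfu',\ldots,\bfu^{(n-1)},\bfu^{(n+2)}\Vert$ survive in $A_n'$, then divide by $\Delta$ and rearrange. The sign bookkeeping in identifying the $j=n-1$ term with $-A_{n-1}$ matches the paper's computation exactly.
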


Since, according to Lemma \ref{lemmaI0}, $I_0$ is generated by $I_1,\ldots,I_n$, the result in Theorem \ref{teo29*} also holds when we add $I_0$ to the list of the $I_i$s. 

\begin{corollary}\label{teo29**}
Let $\bm{C}_1,\bm{C}_2\subset {\Bbb C}^n$ be two curves, not contained in a hyperplane, parametrized by mappings $\bm{p},\bm{q}$ with meromorphic components, admitting rational inverses. If $\bm{C}_1,\bm{C}_2$ are affinely equivalent then there exists a M\"obius transformation $\varphi$ such that 
\begin{equation}\label{Ipqmob*}
I_i(\bfp)=I_i(\bfq\circ \varphi)
\end{equation}
for $i\in\{0,1\ldots,n\}$. 
\end{corollary}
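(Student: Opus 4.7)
The plan is to obtain the corollary as a direct consequence of Theorem \ref{teo29*} together with Lemma \ref{lemmaI0}, since Lemma \ref{lemmaI0} exhibits $I_0$ as a differential-polynomial expression in the invariants $I_{n-1}$ and $I_n$ already covered by Theorem \ref{teo29*}. No new ideas beyond differentiating an already-established identity are required.

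First, I would invoke Theorem \ref{teo29*} to fix a M\"obius transformation $\varphi$ with
\[
I_i(\bfp)=I_i(\bfq\circ \varphi)\quad\text{for all }i=1,\ldots,n,
\]
viewed as identities of meromorphic functions of $z$. The only remaining case is $i=0$. Since the identities above hold as functions of $z$, we may differentiate the one corresponding to $i=n$ with respect to $z$ to obtain
\[
\frac{dI_n(\bfp)}{dz}=\frac{dI_n(\bfq\circ\varphi)}{dz}.
\]

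Next, I would apply Lemma \ref{lemmaI0} with $\bfu=\bfp$ and with $\bfu=\bfq\circ\varphi$ (note that Lemma \ref{lemmaI0} is an identity in $\bfu$ and its derivatives, so it is valid for any such substitution, including the composition $\bfq\circ\varphi$, because the derivatives appearing in $I_0$, $I_{n-1}$, $I_n$ are all taken with respect to the independent variable $z$). This yields
\[
I_0(\bfp)=\dfrac{dI_n(\bfp)}{dz}+I_{n-1}(\bfp)+I_n(\bfp)^2,
\]
\[
I_0(\bfq\circ\varphi)=\dfrac{dI_n(\bfq\circ\varphi)}{dz}+I_{n-1}(\bfq\circ\varphi)+I_n(\bfq\circ\varphi)^2.
\]
Combining the differentiated identity for $I_n$ with the equalities of $I_{n-1}$ and $I_n$ evaluated at $\bfp$ and at $\bfq\circ\varphi$, each summand on the right-hand side of the first expression matches the corresponding summand on the right-hand side of the second. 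Consequently $I_0(\bfp)=I_0(\bfq\circ\varphi)$, and the corollary is established.

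There is essentially no technical obstacle in this proof: the real content sits in Theorem \ref{teo29*} and in Lemma \ref{lemmaI0}, and the role of the corollary is just to observe that, because $I_0$ is expressible through $I_{n-1}$, $I_n$ and $\tfrac{d}{dz}I_n$, adding it to the list of invariants is harmless. The only point requiring a moment of care is the justification that Lemma \ref{lemmaI0}, proved for $\bfu=\bfp$, also applies verbatim with $\bfu=\bfq\circ\varphi$; this is immediate because the lemma is a universal differential identity and the chain rule takes care of treating $\bfq\circ\varphi$ as a new meromorphic parametrization in the variable $z$.
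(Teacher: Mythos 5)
Your proof is correct and matches the paper's intended argument: the paper justifies the corollary in one line by noting that, via Lemma \ref{lemmaI0}, $I_0$ lies in the differential field generated by $I_1,\ldots,I_n$, so the equalities of Theorem \ref{teo29*} propagate to $I_0$. You have simply spelled out that same propagation (differentiating the $i=n$ identity and applying Lemma \ref{lemmaI0} to both $\bfp$ and $\bfq\circ\varphi$) in full detail.
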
 

\subsection{Second step (overview) and third step}\label{secondstep}

The $I_i$ developed in the previous section are not M\"obius-commuting, i.e. $I_i(\bfq\circ\varphi)\neq I_i(\bfq)\circ \varphi$; in other words, calling $\omega:=\varphi(z)$, $I_i(\bfq(\omega))\neq I_i(\bfq)(\omega)$. For instance, in the case $n=3$, expanding $I_i(\bfq(\omega))$ for $i=1,2,3$ we get that
\begin{equation}\label{exsec2}
\begin{array}{rcl}
\omega'^3I_1(\bm{q}(\omega)) &=&3\omega''^3+\dfrac{3}{2}\omega'^2\omega''^2I_3(\bfq)(\omega)-\omega'^4\omega''I_2(\bfq)(\omega)+\omega'^6I_1(\bfq)(\omega) \\
\omega'^2I_2(\bm{q}(\omega)) &=&-9\omega''^2+\omega'^4I_2(\bfq)(\omega)-3\omega'^2\omega''I_3(\bfq)(\omega) \\ 
\omega'I_3(\bm{q}(\omega)) &=&6\omega''+\omega'^2I_3(\bfq)(\omega),\\
\end{array}
\end{equation}
where $\omega',\omega''$ are the first and second derivatives of $\omega=\varphi(z)$ with respect to $z$; to produce these equalities, we have taken into account the definition of $I_1,I_2,I_3$ as quotients of determinants, the Chain Rule, and the fact that, because of Eq. \eqref{schwa3} in Lemma \ref{schwa}, the derivatives of $\omega$ of order higher than 3 can be written in terms of $\omega',\omega''$. However, by eliminating $\omega',\omega''$ in Eq. \eqref{exsec2}, one can show that

\begin{equation}\label{exsec22}
\dfrac{\left[36I_1(\bm{q}(\omega))+6I_2(\bm{q}(\omega))I_3(\bm{q}(\omega))+I_3(\bm{q}(\omega))^3\right]^2}{\left[4I_2(\bm{q}(\omega))+I_3(\bm{q}(\omega))^2\right]^3}=\dfrac{\left[36I_1(\bfq)(\omega)+6I_2(\bfq)(\omega)I_3(\bfq)(\omega)+I_3(\bfq)(\omega)^3\right]^2}{\left[4I_2(\bfq)(\omega)+I_3(\bfq)(\omega)^2\right]^3},
\end{equation}
so that 
\begin{equation}\label{newinv}
F=\dfrac{\left[36I_1(\bfq)+6I_2(\bfq)I_3(\bfq)+I_3^3(\bfq)\right]^2}{\left[4I_2(\bfq)+I_3^2(\bfq)\right]^3}
\end{equation}
is M\"obius-commuting, i.e. $F(\bfq\circ \varphi)=F(\bfq)\circ \varphi$. 

\vspace{0.3 cm}
One can certainly manipulate Eq. \eqref{exsec2} by hand to get rid of $\omega',\omega''$, reach Eq. \eqref{exsec22}, and therefore find the invariant in Eq. \eqref{newinv}. However, we want to produce invariants like the one in Eq. \eqref{newinv} in an algorithmic fashion, and for any dimension: that is the task in step (ii). The rough idea, as in Eq. \eqref{exsec2}, is to get rid of the derivatives $\omega^{(k)}$, $k=1,2,\ldots,n+2$, in the system consisting of the expressions
\[I_i(\bfq(\omega))=\xi_i\left(I_0(\bfq)(\omega),\ldots,I_n(\bfq)(\omega),\omega',\omega'',\ldots,\omega^{(n+2)}\right),\]
where $\xi_i$ is the result of expanding $I_i(\bfq(\omega))$, with $i=0,1,\ldots,n$. The process is involved, and will be detailed in Section \ref{difficult}, but as a final product of this process we get closed expressions for these invariants (see Theorem \ref{egregium} in the next Section \ref{difficult}), that we denote $F_1,\ldots,F_{n-1}$. The generation of the M\"obius-commuting invariants, for any dimension $n$, is implemented in \cite{website}, which can be freely downloaded, and can be done just once for each dimension $n$. In Table \ref{t1} we spell the invariants for low dimension, $2\leq n\leq 4$.

\begin{table}[H]
\begin{tabular}{c|lll}
$n$ & M\"obius-commuting invariants\\
\hline\\
$2$ & $F_1=\dfrac{I_0-I_2^2}{6I_1+I_2^2}$ & & \\
$3$ & $F_1=\dfrac{6I_0-5I_3^2}{4I_2+I_3^2}$& \hspace{-1.5 cm}$F_2=\dfrac{\left(36I_1+6I_2I_3+I_3^3\right)^2}{\left(4I_2+I_3^2\right)^3}$\\
$4$ & $F_1=\dfrac{4I_0-3I_4^2}{10I_3+3I_4^2}$& \hspace{-1.5 cm}$F_2=\dfrac{\left(50I_2+15I_3I_4+3I_4^3\right)^2}{\left(10I_3+3I_4^2\right)^3}$& $F_3=\dfrac{4000I_1+400I_2I_4+60I_3I_4^2+9I_4^4}{\left(10I_3+3I_4^2\right)^2}$
\end{tabular}
\caption{M\"obius-commuting invariants for low dimension}\label{t1}
\end{table}

Next let us address step (iii). Let $F_j$ be a M\"obius-commuting invariant, $j\in \{1,\ldots,n-1\}$. Since $F_j$ is a rational function of the $I_i$, $F_j$ is also an affine invariant, i.e. from Theorem \ref{teo29*} we get that $F_j(\bfp)=F_j(\bfq\circ \varphi)$. Therefore, in terms of the variables $z$ and $\omega:=\varphi(z)$, and taking into account that $F_j(\bfq\circ \varphi)=F_j(\bfq)\circ \varphi$, we deduce that $F_j(\bfp)(z)=F_j(\bfq)(\omega)$. Then we have the following result. 

\begin{proposition}\label{mainTeo}
Let $\bm{C}_1,\bm{C}_2\subset {\Bbb C}^n$ be two curves, not contained in a hyperplane, parametrized by mappings $\bm{p},\bm{q}$ with meromorphic components, admitting rational inverses. Then $\bm{C}_1,\bm{C}_2$ are affinely equivalent if and only if there exists a M\"obius transformation $\varphi$ such that 
\begin{equation}\label{maincurve}
F_j(\bfp)(z)-F_j(\bfq)(\omega)=0,
\end{equation}
for $j\in\{1,2,\ldots,n-1\}$ with $\omega=\varphi(z)$, such that $D(\bfq \circ \varphi)(D(\bfp))^{-1}(z)$ is a constant matrix $A$ and $\bfb=(\bfq\circ\varphi-A\bfp)(z)$ is a constant vector. Furthermore, $f(\bfx)=A\bfx+\bfb$ is an affine equivalence between $\bfC_1, \bfC_2$.
\end{proposition}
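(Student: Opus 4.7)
The proof will be a double implication; both directions revolve around the single equation $A\bfp + \bfb = \bfq\circ\varphi$ obtained from Theorem \ref{th-ref}, but they use the hypotheses in opposite orders. The plan is: in the forward direction, derive the constancy of $A$ and $\bfb$ and the invariant equations from the assumed affine equivalence; in the backward direction, construct $f$ from $A,\bfb$ and verify that it is indeed a non-singular affine mapping sending $\bfC_1$ onto $\bfC_2$.

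For the forward direction, I would assume that $f(\bfx)=A\bfx+\bfb$ is an affine equivalence between $\bfC_1$ and $\bfC_2$. Theorem \ref{th-ref} immediately furnishes a Möbius transformation $\varphi$ such that $f\circ\bfp=\bfq\circ\varphi$, i.e.\ $A\bfp(z)+\bfb=\bfq(\varphi(z))$. Differentiating $n$ times with respect to $z$ and using that $A,\bfb$ are constant yields $A\,D(\bfp)=D(\bfq\circ\varphi)$, so $A=D(\bfq\circ\varphi)(D(\bfp))^{-1}$ is indeed a constant matrix, and consequently $\bfb=\bfq\circ\varphi-A\bfp$ is a constant vector. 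For the invariant equations, I would invoke Theorem \ref{teo29*} (affine invariance of the $I_i$, and therefore of any rational combination $F_j$ of them) to obtain $F_j(\bfp)=F_j(\bfq\circ\varphi)$, and then apply the Möbius-commuting property $F_j(\bfq\circ\varphi)=F_j(\bfq)\circ\varphi$ guaranteed by the construction of the $F_j$ in step (ii); chaining these equalities gives exactly Eq.\ \eqref{maincurve}.

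For the backward direction, I would start from the given Möbius $\varphi$ and set $A:=D(\bfq\circ\varphi)(D(\bfp))^{-1}$ and $\bfb:=\bfq\circ\varphi-A\bfp$, both constant by hypothesis. This yields $\bfq\circ\varphi=A\bfp+\bfb$, that is $f\circ\bfp=\bfq\circ\varphi$ for $f(\bfx)=A\bfx+\bfb$. To conclude that $f$ is an affine equivalence, it remains to check that $A$ is non-singular: since $\bfC_1$ and $\bfC_2$ are not contained in hyperplanes, both $D(\bfp)$ and $D(\bfq\circ\varphi)$ are invertible by the result of Sulanke invoked in Section \ref{overall}, so $A$ is invertible. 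Finally, the identity $f\circ\bfp=\bfq\circ\varphi$ shows $f(\bfC_1)\subseteq\bfC_2$, and because $\varphi$ is a birational self-map of $\Bbb C$ we conclude $f(\bfC_1)=\bfC_2$, so $f$ is an affine equivalence.

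The main subtlety, and the only step that requires care rather than routine computation, is the role of Eq.\ \eqref{maincurve}: it is a necessary condition that lets the algorithm locate the candidate $\varphi$ via bivariate factoring, but it is not by itself sufficient to guarantee affine equivalence, which is why the constancy of $A$ and $\bfb$ must be explicitly required in the statement. The forward direction must therefore verify both the invariant equations and the two constancy conditions, while the backward direction relies only on the constancy conditions together with the non-singularity argument. I expect the single delicate point to be ensuring that $A$ is non-singular, which hinges on the standing hypothesis that neither curve lies in a hyperplane; everything else is a direct consequence of Theorem \ref{th-ref}, Theorem \ref{teo29*}, and the Möbius-commuting property of the $F_j$.
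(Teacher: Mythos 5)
Your proof is correct and follows essentially the same route as the paper's: the forward direction via Theorem \ref{th-ref}, the affine invariance of the $I_i$ (hence of the $F_j$) and the M\"obius-commuting property, and the backward direction by reconstructing $f$ from the constancy of $A$ and $\bfb$. The only difference is that you explicitly verify the non-singularity of $A$ using the hyperplane hypothesis, a point the paper's proof leaves implicit.
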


\begin{proof}
$(\Rightarrow)$ Let $f$ be an affine equivalence between $\bm{C}_1,\bm{C}_2$. By Theorem \ref{th-ref}, there exists a M\"obius function $\varphi$ such that $f\circ \bfp=\bfq \circ \varphi$. By Corollary \ref{teo29**} we have that $I_i(\bfp)(z)=I_i(\bfq(\omega))$ for all $i\in\{0,\ldots,n\}$. Since the $F_j$ are rational functions of the $I_i$, $I_i(\bfp)(z)=I_i(\bfq(\omega))$ yields $F_j(\bfp)(z)=F_i(\bfq)(\omega)$ for $j\in\{1,2,\ldots,n-1\}$. Finally, writing $f(\bfx)=A\bfx+\bfb$, the condition $f\circ \bfp=\bfq \circ \varphi$ implies that $A\bfp(z)+\bfb=\bfq(\varphi(z))$, so $\bfb=(\bfq\circ\varphi-A\bfp)(z)$, which is a constant vector. Furthermore, by differentiating the condition $A\bfp(z)+\bfb=\bfq(\varphi(z))$ (see Subsection \ref{overall}) we deduce that $A=D(\bfq \circ \varphi)(D(\bfp))^{-1}(z)$. $(\Leftarrow)$ Let $\varphi$ be a Möbius transformation satisfying $F_i(\bfp(z))-F_i(\bfq)(\omega)=0$ for $\omega=\varphi(z)$. If $A=D(\bfq \circ \varphi)(D(\bfp))^{-1}(z)$ is a constant matrix, then $D(A\bfp)(z)=D(\bfq\circ \varphi)(z)$, so $A\bfp(z)-(\bfq\circ \varphi)(z)$ is a constant, equal to $-\bfb$. Therefore, $A\bfp(z)+\bfb=\bfq(\varphi(z))$. But this equality implies that $A\bfp(z)+\bfb$, which is the image of $\bm{C}_1$ under the affine mapping $f(\bfx)=A\bfx+\bfb$, and $\bfq(z)$, parametrize the same curve, namely $\bm{C}_2$. Thus, $f(\bfx)=A\bfx+\bfb$ is an affine equivalence between $\bfC_1$ and $\bfC_2$.
\end{proof}

\subsection{Algorithm and examples}\label{algexamples}

To finally turn Proposition \ref{mainTeo} into an algorithm, let $M_j(z,\omega)$ be obtained by clearing denominators in $F_j(\bfp)(z)-F_j(\bfq)(\omega)$. We need to request that $M_j(z,\omega)$ is not identically zero, which amounts to requiring that not all the $F_j$ are constant: this can happen, and an example is the circular helix $\bfp(z)=(\mbox{cos}(z),\mbox{sin}(z),z)$. If $M_j(z,\omega)$ is not zero, then $M_j(z,\omega)=0$ defines an analytic curve in the plane $z,\omega$. Now if
\begin{equation}\label{again}
\varphi(z)=\dfrac{az+b}{cz+d}
\end{equation}
is a M\"obius function satisfying Proposition \ref{mainTeo}, calling $\omega=\varphi(z)$ we get that all the points $(z,\omega)$ of the curve 
\[
\omega(cz+d)-(az+d)=0,
\]
which is an irreducible analytic curve, are also points of the curve $M_j(z,\omega)$. As a consequence of Study's Lemma (see Section 6.13 of\cite{Fischer}), $H(z,\omega)=\omega(cz+d)-(az+d)$ must be a factor of $M_j(z,\omega)$; we say that $H(z,\omega)=\omega(cz+d)-(az+d)$ is a \emph{M\"obius-like} factor of $M_j(z,\omega)$, and that the M\"obius function $\varphi$ in Eq. \eqref{again} is \emph{associated} with $H(z,\omega)$. So we have the following theorem, which follows from Proposition \ref{mainTeo}. 

\begin{theorem}\label{finalTeo}
Let $\bm{C}_1,\bm{C}_2\subset {\Bbb C}^n$ be two curves, not contained in a hyperplane, parametrized by mappings $\bm{p},\bm{q}$ with meromorphic components, admitting rational inverses and where not all the $F_j$ are constant. Then $\bm{C}_1,\bm{C}_2$ are affinely equivalent if and only if there exists a M\"obius-like factor $H(z,\omega)$ common to $M_j(z,\omega)$, $j=1,\ldots,n-1$ such that the corresponding associated M\"obius function $\varphi$ satisfies that: (1) $D(\bfq \circ \varphi)(D(\bfp))^{-1}(z)$ is a constant matrix $A$, (2) $\bfb=(\bfq\circ\varphi-A\bfp)(z)$ is a constant vector. Furthermore, in that case $f(\bfx)=A\bfx+\bfb$ is an affine equivalence between $\bfC_1, \bfC_2$.
\end{theorem}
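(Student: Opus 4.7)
The plan is to derive this statement as an immediate repackaging of Proposition~\ref{mainTeo} using Study's Lemma to convert the pointwise identity $F_j(\bfp)(z)=F_j(\bfq)(\varphi(z))$ into an algebraic divisibility statement on the $M_j(z,\omega)$. I would organize the proof into the two implications, and handle them in this order.

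For the forward implication, suppose $\bfC_1,\bfC_2$ are affinely equivalent. Proposition~\ref{mainTeo} supplies a Möbius transformation $\varphi(z)=(az+b)/(cz+d)$ such that $F_j(\bfp)(z)-F_j(\bfq)(\varphi(z))=0$ for every $z$ in the common domain, and such that conditions (1) and (2) hold. Clearing denominators, this means $M_j(z,\varphi(z))=0$ identically, i.e.\ every point of the graph of $\varphi$ lies on the analytic curve $\{M_j(z,\omega)=0\}$. The graph of $\varphi$ is precisely the zero set of the irreducible polynomial $H(z,\omega)=\omega(cz+d)-(az+b)$. By Study's Lemma, an irreducible analytic/polynomial curve whose zero set is contained in the zero set of $M_j$ must divide $M_j$, so $H\mid M_j$ for every $j\in\{1,\ldots,n-1\}$. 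Hence $H$ is a Möbius-like factor common to all the $M_j$, and its associated Möbius function is precisely the $\varphi$ furnished by Proposition~\ref{mainTeo}, which by construction satisfies (1) and (2).

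For the backward implication, suppose we are given a common Möbius-like factor $H(z,\omega)=\omega(cz+d)-(az+b)$ of $M_1,\ldots,M_{n-1}$ whose associated $\varphi(z)=(az+b)/(cz+d)$ satisfies (1) and (2). Since $H\mid M_j$, the identity $M_j(z,\varphi(z))=0$ holds for all admissible $z$, and, away from the zero set of the denominators cleared to form $M_j$, this is equivalent to $F_j(\bfp)(z)=F_j(\bfq)(\varphi(z))$, i.e.\ equation~\eqref{maincurve} of Proposition~\ref{mainTeo}. Combined with the standing hypotheses (1) and (2), this is exactly what Proposition~\ref{mainTeo} requires to conclude that $f(\bfx)=A\bfx+\bfb$ is an affine equivalence between $\bfC_1$ and $\bfC_2$.

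The only delicate point is the application of Study's Lemma: one must verify that $H(z,\omega)$ is irreducible (which is clear, as it is linear in $\omega$ with coprime coefficients modulo the non-degeneracy condition $ad-bc\neq 0$) and that each $M_j$ in question is not identically zero, which is guaranteed by the hypothesis that not all the $F_j$ are constant — this is precisely the reason that assumption is inserted into the statement. Everything else is bookkeeping: the existence of a \emph{common} Möbius-like factor across $j=1,\ldots,n-1$ corresponds to a single $\varphi$ simultaneously satisfying all the identities $F_j(\bfp)(z)=F_j(\bfq)(\varphi(z))$, which is what Proposition~\ref{mainTeo} demands. I do not expect any genuine obstacle beyond cleanly invoking Study's Lemma in this analytic setting.
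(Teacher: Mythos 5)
Your proposal is correct and follows essentially the same route as the paper: both directions are obtained by combining Proposition~\ref{mainTeo} with Study's Lemma to translate the identity $F_j(\bfp)(z)=F_j(\bfq)(\varphi(z))$ into the statement that the irreducible curve $H(z,\omega)=0$ divides each $M_j(z,\omega)$, with the hypothesis that not all $F_j$ are constant guaranteeing the $M_j$ are nonzero. Your explicit attention to the irreducibility of $H$ and to the role of that hypothesis matches the paper's (more tersely stated) reasoning.
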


Thus, we get the following procedure {\tt AffineEquivalences} to find the affine equivalences between the curves $\bm{C}_1,\bm{C}_2$ defined by $\bfp,\bfq$.

\begin{algorithm}[H]
\caption*{{\tt AffineEquivalences}}
\textbf{Input:} \textit{Two parametrizations $\bm{p}$ and $\bm{q}$ with meromorphic components, admitting rational inverses.} \\
\textbf{Output:} \textit{Either the list of affine equivalences between the curves, or the warning {\tt The curves are not affinely equivalent}}
\begin{algorithmic}[1]
    \Procedure{\tt AffEq}{$\bm{p},\bm{q}$}
    \State Compute $M_j(x,z)$, $j=1,\ldots,n-1$, by clearing denominators in $F_j(\bfp)(z)-F_j(\bfq)(\omega)$.
		\If{all the $M_j$ are identically zero}
    	\State {\bf return} {\tt Failure: all the M\"obius-commuting invariants are constant}
    	\Else 
    \State Compute the common factor $L(x,z)$ of the $M_j(x,z)$.
    \State Let $\mathcal{L}$ be the list of M\"obius-like factors of $L(x,z)$ 
    \If{$\mathcal{L}=\emptyset$}
    	\State {\bf return} {\tt The curves are not affinely equivalent}
    	\Else 
			\For{$\varphi\in \mathcal{L}$}
			\State Check whether or not $A=D(\bm{q}(\varphi))D(\bm{p})^{-1}$, $\bfb=\bfq\circ\varphi-A\bfp$ are constant
			\State In the affirmative case, {\bf return} $f(\bfx)=A\bfx+\bfb$.
    	\EndFor
    \EndIf
		    \EndIf
   \EndProcedure
\end{algorithmic}
\end{algorithm}

If $\bfp,\bfq$ are rational, the $M_j(x,z)$ are rational and $H(z,\omega)$ is a factor of $\mbox{gcd}(M_1(z,\omega),\ldots,M_{n-1}(z,\omega))$. However, the computer algebra system {\tt Maple}, where we implemented the procedure (see \cite{website}), can compute $H(z,\omega)$ also in the case when $\bfp,\bfq$ are not rational, but satisfy the hypotheses of the procedure. In this last case, we ask {\tt Maple} to solve $H(z,\omega)$ for $\omega$ to find the M\"obius functions. 

\begin{remark}
Although {\tt Maple Help System} is not too specific about this, in the case when the $M_j(x,z)$ are not rational the idea seems to be that {\tt Maple} renames repeated non-rational expressions found in the $M_j(x,z)$ (e.g. $\mbox{cos}(z),e^z$, etc.) to form rational functions, and then proceeds by applying the algorithm for the rational case. 
\end{remark}

\vspace{0.3 cm}
In order to illustrate the performance of the procedure {\tt AffineEquivalences}, we consider now two examples where we compute the affine equivalences between curves taken from Ex. \ref{somecurves}, and the images of these curves under an affine mapping. These examples were computed with {\tt Maple} and executed in a PC with a 3.60 GHz Intel Core i7 processor and 32 GB RAM, and are accessible in \cite{website} as well.

\begin{example}\label{Ex1cont}
[{\it 2D catenary curves}] Consider the curves $\bfC_1$ and $\bfC_2$ parametrized by 
\begin{equation*}
\bfp(z)=\begin{pmatrix}
2z-\cosh(2z)+1\\
4z+\cosh(2z)
\end{pmatrix},\,\,\bfq(z)=\begin{pmatrix}
z\\
\cosh(z)
\end{pmatrix}.
\end{equation*}
The curve $\bfq(z)$ corresponds to the first curve in Ex. \ref{somecurves}, which is a catenary curve. After appying our algorithm, we find two factors $\widehat{H}_i,(z,\omega)$, $i=1,2$, common to the $M_j$, namely
\[
\widehat{H}_1(z,\omega)=\cosh(\omega)\sinh(2z)-\cosh(2z)\sinh(\omega),\mbox{ }\widehat{H}_2(z,\omega)=\cosh(\omega)\sinh(2z)+\cosh(2z)\sinh(\omega).
\]
When solving for $\omega$, we get infinitely many (complex) M\"obius functions leading to infinitely many (complex) affine equivalences, which reveals that the $\widehat{H}_i(\omega,z)$ contain M\"obius-like factors. The affine equivalences can be classified in three classes $f_j(\bfx)=A_j\bfx+\bfb_j$, $j\in\{1,2,3\}$, with associated M\"obius functions $\varphi_j(z)$:
\begin{equation*}
A_1=\begin{pmatrix}
\dfrac{\strut 1}{\strut 3} & \dfrac{\strut 1}{\strut 3}\\
(-1)^{k_1+1}\dfrac{\strut 2}{\strut 3} & (-1)^{k_1}\dfrac{\strut 1}{\strut 3}
\end{pmatrix},\,\,\bfb_1=\begin{pmatrix}
-\dfrac{\strut 1}{\strut 3}+{\bf i}k_1\pi\\
(-1)^{k_1}\dfrac{\strut 2}{\strut 3}
\end{pmatrix},\,\,\varphi_1(z)=2z+{\bf i}k_1\pi,\,k_1\in\mathbb{Z},
\end{equation*}

\begin{align*}
A_2=\begin{pmatrix}
-\dfrac{\strut 1}{\strut 3} & -\dfrac{\strut 1}{\strut 3}\\
-\dfrac{\strut 2}{\strut 3} & \dfrac{\strut 1}{\strut 3}
\end{pmatrix},\,\,\bfb_2=\begin{pmatrix}
\dfrac{\strut 1}{\strut 3}+2{\bf i}k_2\pi\\
\dfrac{\strut 2}{\strut 3}
\end{pmatrix},\,\,\varphi_2(z)=-2z+{\bf i}k_2\pi,\,k_2\in\mathbb{Z},
\end{align*}
and
\begin{align*}
A_3=\begin{pmatrix}
-\dfrac{\strut 1}{\strut 3} & -\dfrac{\strut 1}{\strut 3}\\
\dfrac{\strut 2}{\strut 3} & -\dfrac{\strut 1}{\strut 3}
\end{pmatrix},\,\,\bfb_3=\begin{pmatrix}
\dfrac{\strut 1}{\strut 3}+(2k_2+1){\bf i}\pi\\
-\dfrac{\strut 2}{\strut 3}
\end{pmatrix},\,\,\varphi_3(z)=-2z+(2k_2+1){\bf i}\pi,\,k_2\in\mathbb{Z},
\end{align*}
where ${\bf i}^2=-1$. If we just consider real affine equivalences, we have three of them, which correspond to fixing $k_1=0$ for $f_1(\bfx)$, $k_2=0$ for $f_2(\bfx)$, $k_2=-1/2$ for $f_3(\bfx)$. The whole computation took 0.172 seconds.
\end{example}

\begin{example}\label{Ex1cont2}
[{\it 3D spirals}] Consider the curves $\bfC_1$ and $\bfC_2$ parametrized by 
\begin{equation*}
\bfp(z)=\begin{pmatrix}
z\dfrac{\strut e^{4{\bf i}z}+1}{\strut e^{2{\bf i}z}}-{\bf i}z\dfrac{\strut e^{4{\bf i}z}-1}{\strut e^{2{\bf i}z}}+1\\
2z\dfrac{\strut e^{4{\bf i}z}+1}{\strut e^{2{\bf i}z}}-{\bf i}z\dfrac{\strut e^{4{\bf i}z}-1}{\strut e^{2{\bf i}z}}-2z\\
-2z-1
\end{pmatrix} ,\,\,\bfq(z)=\begin{pmatrix}
z\dfrac{\strut e^{2{\bf i}z}+1}{\strut 2e^{{\bf i}z}}\\
-{\bf i}z\dfrac{\strut e^{2{\bf i}z}-1}{\strut 2e^{{\bf i}z}}\\
z
\end{pmatrix}.
\end{equation*}
The curve $\bfq(z)$ corresponds to the third curve in Ex. \ref{somecurves}, which is a 3D spiral. After applying our algorithm, we find two M\"obius-like factors $H_i(z,\omega)$, $i=1,2$, common to the $M_j(z,\omega)$, namely
\[
H_1(z,\omega)=\omega-2z,\mbox{ }H_2(z,\omega)=\omega+2z.
\]
When solving for $\omega$, we get two M\"obius transformations $\varphi_1(z)=-2z$ and $\varphi_2(z)=2z$ corresponding to the affine equivalences $f_1(\bfx)=A_1\bfx+\bfb_1$ and $f_2(\bfx)=A_2\bfx+\bfb_2$ with
\begin{equation*}
A_1=\begin{pmatrix}
1 & -1 &1\\
2 & -1 & 1\\
0 & 0 & 1
\end{pmatrix},\,\,\bfb_1=\begin{pmatrix}
0\\
-1\\
1
\end{pmatrix}
\end{equation*}
and
\begin{equation*}
A_2=\begin{pmatrix}
-1 & 1 &-1\\
2 & -1 & 1\\
0 & 0 & -1
\end{pmatrix},\,\,\bfb_1=\begin{pmatrix}
0\\
-1\\
-1
\end{pmatrix}.
\end{equation*}
The whole computation took 0.032 seconds.
\end{example}

\begin{example}
[{\it Rational curves in $n$-th dimension}] Finally, in Table \ref{tablerat} we present the results of performance tests to compute affine equivalences between rational curves of various degrees, in different dimensions. The rational curves in the experiments were randomly generated, see also \cite{website}, with coefficients between $-10$ and $10$. After generating the first curve, the second curve was obtained by applying an affine mapping $f(\bfx)=A\bfx+\bfb$ to the first curve, where the matrix and the translation vector, for each dimension, are shown in Table \ref{tableAff}; additionally, the resulting curve was reparametrized using a M\"obius transformation $\varphi(z)=2z-1$. The timings to recover the affine equivalences are shown in Table \ref{tableAff}: the rows of Table \ref{tableAff} correspond to dimensions from $n=2$ to $n=6$, and the columns, to degrees from $d=6$ to $d=12$. For degrees up to 10, we can compute the affine equivalences between the curves in less than a minute, for all the dimensions tested. 
 
\begin{table}[H]
\centering
\begin{tabular}{c|c c}
$n$ & $A$ & $\bfb$\\
\hline\\
$2$ & $\begin{pmatrix}
1 & -1 \\
2 & 0
\end{pmatrix}$ & $\begin{pmatrix}
0\\
1
\end{pmatrix}$\\
&&\\
$3$ & $\begin{pmatrix}
1 & -1 & 2 \\
2 & 0 & 3\\
0 & 0 & 4
\end{pmatrix}$ & $\begin{pmatrix}
0\\
1\\
0
\end{pmatrix}$\\
&&\\
$4$ & $\begin{pmatrix}
1 & -1 & 2 & -1\\
2 & 0 & 3 & 0\\
0 & 0 & 4 & -1 \\
0 & 1 & 0 & 2
\end{pmatrix}$ & $\begin{pmatrix}
0\\
1\\
0\\
0
\end{pmatrix}$\\
&&\\
$5$ & $\begin{pmatrix}
1 & -1 & 2 & -1 &3 \\
2 & 0 & 3 & 0 & 1\\
0 & 0 & 4 & -1 & 3 \\
0 & 0 & 4 & -1 & 0\\
0 & 1 & 0 & 2 &1
\end{pmatrix}$ & $\begin{pmatrix}
0\\
1\\
0\\
0\\
0
\end{pmatrix}$\\
&&\\
$6$ & $\begin{pmatrix}
1 & -1 & 2 & -1 &3 & 0 \\
2 & 0 & 3 & 0 & 1 & 2\\
0 & 0 & 4 & -1 & 3 & 1\\
0 & 0 & 4 & -1 & 0 & 2\\
0 & 1 & 0 & 2 &1 & 1\\
0 & -1 & 2 & 0 & -1 & 3
\end{pmatrix}$ & $\begin{pmatrix}
0\\
1\\
0\\
0\\
0\\
0
\end{pmatrix}$
\end{tabular}
\caption{Affine mappings used in the examples}
\label{tableAff}
\end{table}

\begin{table}[H]
\centering
\begin{tabular}{c c|c c c c c c c}
&&\multicolumn{7}{c}{Degree}\\
&$n$ & $6$ & $7$ & $8$ & $9$ & $10$ & $11$ & $12$ \\
\hline
 &$2$ &$0.109$&$0.188$&$0.125$&$0.203$&$0.453$&$0.750$&$0.532$\\
&$3$&$0.969$&$1.969$&$3.750$&$6.406$&$8.579$&$12.281$&$15.703$\\
&$4$&$1.343$&$2.063$&$4.359$&$7.453$&$12.531$&$17.688$&$30.234$\\
&$5$&$2.813$&$6.047$&$14.000$&$28.609$&$48.406$&$89.203$&$138.546$\\
&$6$&$0.922$&$6.281$&$12.203$&$26.609$&$51.328$&$90.344$&$153.719$
\end{tabular}
\caption{CPU time in seconds for affine equivalences of random rational curves with various degrees in various dimensions}
\end{table}
\label{tablerat}

\end{example}

\section{Justification of step (ii): computation of M\"obius-commuting invariants} \label{difficult}

Step (ii) in the strategy presented at the beginning of Section \ref{overall} corresponds to the computation of what we called M\"obius-commuting invariants. The deduction of these invariants is involved; so in order to develope our reasoning, we will distinguish three small substeps, that we present in separate subsections:
 
\begin{itemize}
\item [(ii.1)] Rewriting high order derivatives of $\omega$ and rewriting Fa\`a di Bruno's formula. 
\item [(ii.2)] Expansion of $I_i(\bfq(\omega))$ for $i=1,\ldots,n-1$. 
\item [(ii.3)] Computation of M\"obius-commuting invariants.
\end{itemize}

Here we recall the notation $\omega=\varphi(z)$, and the notation $\omega',\omega'',\ldots, \omega^{(k)}$ for the derivatives of $\omega$ with respect to $z$. Recall also from Subsection \ref{secondstep} that the rough idea in step (ii) is to eliminate the derivatives of $\omega$ from the expressions resulting from expanding $I_i(\bfq(\omega))$. In order to do that, first we will rewrite all the derivatives $\omega^{(k)}$ in terms of just $\omega'$; we will do that in step (ii.1) with the help of the expansion of $I_n(\bfq(\omega))$, for which we will make use of the tools introduced in Subsection \ref{addtools} and, in particular, Fa\`a di Bruno's formula. Then, in step (ii.2), we will compute an expansion form for $I_i(\bfq(\omega))$ for $i=1,\ldots,n-1$; this is the hardest part, where we will need to make use, again, of Fa\`a di Bruno's formula, rewritten in an advantageous form in substep (ii.1), and some combinatorics. Finally, in step (ii.3), we will make use of $I_0(\bfq(\omega))$, the auxiliary invariant introduced in Eq. \eqref{I3} to finally eliminate $\omega'$, the only derivative of $\omega$ left after substep (ii.1), and compute the M\"obius-commuting invariants.

\subsection{Substep \emph{(ii.1)}}
%\subsection{Expansion of $I_n(\bfq(\omega))$ and high order derivatives of $\omega$}

Our first step in order to eliminate the derivatives $\omega^{(k)}$ is to write all of them in terms of $\omega'$; later, we will use this to rewrite Fa\`a di Bruno's formula, introduced in Section \ref{addtools}, in an alternative form that will be useful in substep (ii.2). In order to do this, we will take advantage of the expansion of $I_n(\bfq(\omega))$: in general, expanding $I_i(\bfq(\omega))$ for $i=0,1,\ldots,n-1$ is messy and will be the hardest part, deferred for substep (ii.2), but expanding $I_n(\bfq(\omega))$ is much more accesible. So let focus on this. From Eq. \eqref{first-inv}, we need to expand $\Delta(\bfq(\omega))$ and $A_n(\bfq(\omega))$. In both cases we will make use of Fa\`a di Bruno's formula. First, from Eq. \eqref{As} 

\begin{equation}\label{Deltaqomega}
\Delta(\bfq(\omega))=\left\Vert \dfrac{d}{dz}(\bfq(\omega)),\dfrac{d^2}{dz^2}(\bfq(\omega)),\ldots,\dfrac{d^n}{dz^n}(\bfq(\omega))\right\Vert.
\end{equation}
From Fa\`a di Bruno's formula, Eq. \eqref{someq1} and Eq. \eqref{someq2}, we have 
\[
\dfrac{d}{dz}(\bfq(\omega))=\omega'\bfq'(\omega), 
\]
and for $k\geq 2$,
\begin{equation}\label{dzk}
\dfrac{d^k}{dz^k}(\bfq(\omega))=\bullet_{k-1}+(\omega')^k\bfq^{(k)}(\omega),
\end{equation}
where $\bullet_{k-1}$ is a linear combination of the derivatives of $\bfq$ up to order $k-1$, evaluated at $\omega$. By expanding the determinant in Eq. \eqref{Deltaqomega} as a sum of determinants, we observe that all the determinants including terms of the $\bullet_{k-1}$, $k=2,\ldots,n$, must be zero. Thus, we are left with one determinant, whose columns $k=1,2,\ldots,n$ are 
\[
(\omega')^k\bfq^{(k)}(\omega),
\]
and we get the following result. 

\begin{lemma}\label{deltaq}
$\Delta(\bfq(\omega))=(\omega')^{\frac{n(n+1)}{2}}\Delta \bfq(\omega).$
\end{lemma}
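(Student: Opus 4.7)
My plan is to use the formula recalled just above the lemma, namely
\[
\dfrac{d^k}{dz^k}(\bfq(\omega))=\bullet_{k-1}+(\omega')^k\bfq^{(k)}(\omega),\qquad k\geq 2,
\]
together with the analogous $\frac{d}{dz}(\bfq(\omega))=\omega'\bfq'(\omega)$ for $k=1$, and substitute these into the columns of the determinant in Eq.~\eqref{Deltaqomega}. Since the determinant is multilinear in its columns, I would expand it as a sum of determinants; each summand is obtained by choosing, for every column $k\in\{1,\ldots,n\}$, one of the vector terms making up $\frac{d^k}{dz^k}(\bfq(\omega))$.

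Because $\bullet_{k-1}$ is by construction a linear combination of $\bfq'(\omega),\ldots,\bfq^{(k-1)}(\omega)$, after one more multilinear expansion every surviving summand has the form
\[
\left(\prod_{k=1}^n c_{k,j_k}\right)\left\Vert \bfq^{(j_1)}(\omega),\bfq^{(j_2)}(\omega),\ldots,\bfq^{(j_n)}(\omega)\right\Vert,
\]
where the index $j_k$ satisfies $j_k\leq k$: namely, $j_k=k$ corresponds to the ``leading'' choice $(\omega')^k\bfq^{(k)}(\omega)$, with $c_{k,k}=(\omega')^k$, while any other choice forces $j_k<k$. The main step is the following pigeonhole observation: for the determinant to be nonzero, $(j_1,j_2,\ldots,j_n)$ must be a permutation of $(1,2,\ldots,n)$; but the constraint $j_k\leq k$ for every $k$ forces $j_1=1$, then $j_2=2$, and so on by induction, hence $j_k=k$ for all $k$. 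Therefore the only surviving summand is the one coming from the leading terms.

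I expect this combinatorial step to be the only delicate point; once it is in place, the conclusion is immediate. Factoring $(\omega')^k$ out of column $k$ in the surviving determinant yields the scalar $\prod_{k=1}^n (\omega')^k=(\omega')^{1+2+\cdots+n}=(\omega')^{n(n+1)/2}$, while the remaining determinant is exactly $\Vert\bfq'(\omega),\bfq''(\omega),\ldots,\bfq^{(n)}(\omega)\Vert=\Delta\bfq(\omega)$. This gives
\[
\Delta(\bfq(\omega))=(\omega')^{\frac{n(n+1)}{2}}\Delta\bfq(\omega),
\]
as claimed.
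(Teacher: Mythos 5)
Your proposal is correct and follows essentially the same route as the paper: expand the determinant multilinearly using Fa\`a di Bruno's formula, observe that any summand picking a lower-order derivative in some column must vanish (your pigeonhole argument $j_k\leq k$ makes this explicit), and factor $(\omega')^k$ out of each surviving column. The paper states the vanishing of the non-leading terms more briefly, but the argument is the same.
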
 

To expand $I_n(\bfq(\omega))$ we also need $A_n(\bfq(\omega))$. From Eq. \eqref{As},
\begin{equation}\label{AnExp1}
A_n(\bfq(\omega))=\left\Vert \dfrac{d(\bfq(\omega))}{dz},\ldots,\dfrac{d^{n-1}(\bfq(\omega))}{dz^{n-1}},\dfrac{d^{n+1}(\bfq(\omega))}{dz^{n+1}}\right\Vert.
\end{equation}
Furthermore, from Fa\`a di Bruno's formula, 
\begin{equation}\label{AnExp4}
\dfrac{d^{n+1}(\bfq(\omega))}{dz^{n+1}}=\bullet_{n-1}+\bfq^{(n)}(\omega)B_{n+1,n}(\omega',\omega'')+\bfq^{(n+1)}(\omega)B_{n+1,n+1}(\omega'),
\end{equation}
where $\bullet_{n-1}$ contains a linear combination of the derivatives of $\bfq$ up to order $n-1$, evaluated at $\omega$. Also, from Eq. \eqref{bell}, 
\begin{equation}\label{AnExp2}
\bfq^{(n)}(\omega)B_{n+1,n}(\omega',\omega'')=\dfrac{n(n+1)}{2}(\omega')^{n-1}\omega''\bfq^{(n)}(\omega),
\end{equation}
and
\begin{equation}\label{AnExp3}
\bfq^{(n+1)}(\omega)B_{n+1,n+1}(\omega')=(\omega')^{n+1}\bfq^{(n+1)}(\omega).
\end{equation}

Now let us substitute Eq. \eqref{dzk}, for $k=1,\ldots,n-1$, and Eqs. \eqref{AnExp4}-\eqref{AnExp2}-\eqref{AnExp3}, into Eq. \eqref{AnExp1}. Expanding as a sum of determinants, we observe again that the determinants including some term of the $\bullet_{k-1}$, $k=2,\ldots,n$ must be zero. Thus, we arrive at
\[
A_n(\bfq(\omega))=\dfrac{n(n+1)}{2}(\omega')^{\frac{n(n+1)}{2}-1}\omega''\Delta(\bfq)(\omega)+(\omega')^{\frac{n(n+1)}{2}+1}A_n(\bfq)(\omega).
\]
Dividing the last equality by $\Delta(\bfq(\omega))$ and using Lemma \ref{deltaq}, we reach the following result. 

\begin{lemma}\label{Inqomega}
$\displaystyle{I_{n}(\bfq(\omega))=\dfrac{n(n+1)}{2}\dfrac{\omega''}{\omega'}+\omega'I_n(\bfq)(\omega)}.$
\end{lemma}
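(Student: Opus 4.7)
The plan is essentially to assemble the pieces that have already been set up in the preceding paragraphs. By definition, $I_n(\bfq(\omega))=A_n(\bfq(\omega))/\Delta(\bfq(\omega))$, so I need closed-form expressions for the numerator and denominator in terms of $\omega'$, $\omega''$, $\Delta(\bfq)(\omega)$, and $A_n(\bfq)(\omega)$, and then divide. The denominator is already handled by Lemma \ref{deltaq}, which gives $\Delta(\bfq(\omega))=(\omega')^{n(n+1)/2}\Delta(\bfq)(\omega)$, so the entire task is to finish the expansion of the numerator $A_n(\bfq(\omega))$ that has been initiated in Eqs. \eqref{AnExp1}--\eqref{AnExp3}.

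First I would substitute the Fa\`a di Bruno expansions $d^k(\bfq(\omega))/dz^k=\bullet_{k-1}+(\omega')^k\bfq^{(k)}(\omega)$ for $k=1,\ldots,n-1$ into the first $n-1$ columns of the determinant in Eq. \eqref{AnExp1}, and the three-term expansion Eq. \eqref{AnExp4} into the last column. Multilinearity of the determinant then splits $A_n(\bfq(\omega))$ into a sum of many determinants; the same ``kill the lower-order terms" argument used in Lemma \ref{deltaq} eliminates every determinant that picks a summand from some $\bullet_{k-1}$, because such a column lies in the span of the other columns. Only two determinants survive: one where the last column contributes $\bfq^{(n)}(\omega)B_{n+1,n}(\omega',\omega'')$ and one where it contributes $\bfq^{(n+1)}(\omega)B_{n+1,n+1}(\omega')$.

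Using Eq. \eqref{AnExp2} and Eq. \eqref{AnExp3}, these two surviving determinants factor cleanly: the first one reproduces $\Delta(\bfq)(\omega)$ up to the scalar $\frac{n(n+1)}{2}(\omega')^{1+2+\cdots+(n-1)}(\omega')^{n-1}\omega''$, and the second reproduces $A_n(\bfq)(\omega)$ up to the scalar $(\omega')^{1+2+\cdots+(n-1)}(\omega')^{n+1}$. A quick check of the exponents gives $\frac{n(n-1)}{2}+(n-1)=\frac{n(n+1)}{2}-1$ and $\frac{n(n-1)}{2}+(n+1)=\frac{n(n+1)}{2}+1$, so
\[
A_n(\bfq(\omega))=\tfrac{n(n+1)}{2}(\omega')^{\frac{n(n+1)}{2}-1}\omega''\,\Delta(\bfq)(\omega)+(\omega')^{\frac{n(n+1)}{2}+1}A_n(\bfq)(\omega).
\]
Dividing by $\Delta(\bfq(\omega))$ and invoking Lemma \ref{deltaq} yields the stated identity immediately.

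The one place that requires genuine care, rather than pure bookkeeping, is verifying the two Bell-polynomial values. For $B_{n+1,n+1}$ the single admissible multi-index from \eqref{someq1} is $\ell_1=n+1$, which gives $(\omega')^{n+1}$ by Eq. \eqref{someq2}; for $B_{n+1,n}$ the constraints $\sum \ell_i=n$ and $\sum i\ell_i=n+1$ force $\ell_1=n-1$, $\ell_2=1$, and the multinomial coefficient in Eq. \eqref{bell2} evaluates to $\frac{(n+1)!}{(n-1)!\,1!}\cdot\frac{1}{(1!)^{n-1}(2!)^1}=\frac{n(n+1)}{2}$. This is the only genuinely combinatorial input; everything else is linear-algebraic reduction parallel to the proof of Lemma \ref{deltaq}.
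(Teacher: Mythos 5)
Your proposal is correct and follows essentially the same route as the paper: substitute the Fa\`a di Bruno expansions into the determinant defining $A_n(\bfq(\omega))$, use multilinearity to discard every term drawn from a $\bullet_{k-1}$ (which forces a repeated derivative among the columns), keep the two surviving determinants coming from $B_{n+1,n}$ and $B_{n+1,n+1}$, and divide by $\Delta(\bfq(\omega))$ via Lemma \ref{deltaq}. Your exponent bookkeeping and the two Bell-polynomial evaluations agree exactly with Eqs. \eqref{AnExp2}--\eqref{AnExp3} and the displayed formula for $A_n(\bfq(\omega))$ in the paper.
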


Notice that the formula for $I_n(\bfq(\omega))$ in Lemma \ref{Inqomega} is linear in $\omega''$. This allows us to write
\begin{equation}\label{omegados}
\omega''=\dfrac{2\omega'}{n(n+1)}(I_{n}(\bfq(\omega))-\omega'I_n(\bfq)(\omega)).
\end{equation}
Furthermore, using Eq. \eqref{omegados} and invoking Lemma \ref{schwa} in Section \ref{addtools}, we can write all the derivatives of $\omega$ in terms of just $\omega'$, which was one of the goals of this substep. We formulate this as a corollary of Lemma \ref{Inqomega}. 

\begin{corollary}\label{comCor2}
$\omega^{(k)}=\dfrac{k!}{n^{k-1}(n+1)^{k-1}}\omega'(I_n(\bfq(\omega))-\omega'I_n(\bfq)(\omega))^{k-1}$, $k\geq 2$.
\end{corollary}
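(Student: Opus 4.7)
The plan is to treat this as a direct substitution: the two needed ingredients, Eq. \eqref{omegados} (derived from Lemma \ref{Inqomega}) and Lemma \ref{schwa}, together already encode the statement, so the work is really just to verify the arithmetic.

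First I would check the base case $k=2$ separately, since Lemma \ref{schwa} is only stated for $k\geq 3$. For $k=2$, the claimed formula reads
\[
\omega^{(2)} = \frac{2!}{n(n+1)}\omega'\left(I_n(\bfq(\omega))-\omega'I_n(\bfq)(\omega)\right),
\]
which is exactly Eq. \eqref{omegados}, so the base case is immediate.

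For $k\geq 3$, I would start from Lemma \ref{schwa},
\[
\omega^{(k)}=\frac{k!}{2^{k-1}}\frac{(\omega'')^{k-1}}{(\omega')^{k-2}},
\]
and substitute the closed expression for $\omega''$ from Eq. \eqref{omegados}. Writing $T:=I_n(\bfq(\omega))-\omega'I_n(\bfq)(\omega)$ for brevity, Eq. \eqref{omegados} gives
\[
(\omega'')^{k-1}=\left(\frac{2\omega'}{n(n+1)}\right)^{k-1}T^{k-1}=\frac{2^{k-1}(\omega')^{k-1}}{n^{k-1}(n+1)^{k-1}}\,T^{k-1}.
\]
Plugging this into Lemma \ref{schwa} and cancelling the factors $2^{k-1}$ and $(\omega')^{k-2}$ leaves
\[
\omega^{(k)}=\frac{k!}{n^{k-1}(n+1)^{k-1}}\,\omega'\,T^{k-1},
\]
which is precisely the claimed identity.

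There is no real obstacle here: the only subtlety is to notice the unified formula in $k$ covers the $k=2$ case when we interpret the powers $n^{k-1}$, $(n+1)^{k-1}$ at $k=2$ correctly, while the $k\geq 3$ cases reduce mechanically to plugging the quadratic derivative formula into the Schwartzian-derived power formula. I would just note this split in a single short paragraph and display the substitution above.
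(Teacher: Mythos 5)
Your proof is correct and follows exactly the route the paper intends: the paper states the corollary as an immediate consequence of Eq.~\eqref{omegados} and Lemma~\ref{schwa}, and your substitution (with the separate check that the $k=2$ case is just Eq.~\eqref{omegados} itself) is precisely the verification left implicit there.
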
 

Finally, let us use Corollary \ref{comCor2} to rewrite Fa\`a di Bruno's formula. In order to do this, let us introduce the notation 
\begin{equation}\label{newnota}
K_n:=I_n(\bfq(\omega)),\mbox{ }G_n:=I_n(\bfq)(\omega),\mbox{ }\Phi:=K_n-\omega' G_n.
\end{equation}
Thus, using Corollary \ref{comCor2} and the above notation, the Bell polynomial in Eq. \eqref{bell} can be written as
\begin{equation}\label{newBell}
B_{k,m}(\omega',\omega'',\ldots,\omega^{(k+1-m)})=B_{k,m}\left(\omega',\dfrac{2!}{n(n+1)}\omega' \Phi,\ldots,\ldots,\dfrac{(k+1-m)!}{n^{k-m}(n+1)^{k-m}}\omega'\Phi^{k-m}\right),
\end{equation}
where only $\omega'$ is involved. Let 
\[
L(k,m)=\binom{k-1}{m-1}\dfrac{k!}{m!}
\]
be the \emph{Lah number} $L(k,m)$ (see for instance \cite{K21}), and let us denote
\begin{equation}\label{bellnew}
\tilde{B}_{k,m}=\dfrac{1}{n^{k-m}(n+1)^{k-m}}L(k,m).
\end{equation}
Then we have the following result, proved in Appendix I. 

\begin{lemma}\label{comLem4}
$B_{k,m}(\omega',\omega'',\ldots,\omega^{(k+1-m)})=\tilde{B}_{k,m}\omega'^m \Phi^{k-m}$.
\end{lemma}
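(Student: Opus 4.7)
The plan is to expand the right-hand side of \eqref{newBell} using the explicit formula \eqref{bell2} for the partial Bell polynomials, then exploit the constraints \eqref{someq1} on the summation indices to pull $\omega'$, $\Phi$, and the $n,n+1$ factors out as a common factor, reducing the task to a known combinatorial identity for Lah numbers.

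\medskip

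Concretely, from Corollary \ref{comCor2} we have $\omega^{(i)} = \frac{i!}{n^{i-1}(n+1)^{i-1}}\,\omega'\,\Phi^{i-1}$ for $i\geq 2$ (and trivially $\omega^{(1)}=\omega'$), so setting $x_i := \omega^{(i)}$ gives the very clean ratio
\[
\frac{x_i}{i!} \;=\; \frac{1}{n^{i-1}(n+1)^{i-1}}\,\omega'\,\Phi^{i-1},\qquad i=1,\ldots,k+1-m.
\]
Substituting into \eqref{bell2} yields, term by term,
\[
\prod_{i=1}^{k+1-m}\!\left(\frac{x_i}{i!}\right)^{\!\ell_i}
=\frac{\omega'^{\sum_i \ell_i}\,\Phi^{\sum_i (i-1)\ell_i}}{n^{\sum_i (i-1)\ell_i}(n+1)^{\sum_i (i-1)\ell_i}}.
\]

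\medskip

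The key observation is that the two constraints $\sum_i \ell_i = m$ and $\sum_i i\,\ell_i = k$ in \eqref{someq1} make all three exponents constant across the summation: $\sum_i \ell_i = m$ and $\sum_i (i-1)\ell_i = k-m$. Hence the factor $\omega'^m\Phi^{k-m}/(n^{k-m}(n+1)^{k-m})$ is independent of the index sequence and can be pulled out of the sum, leaving
\[
B_{k,m}(\omega',\omega'',\ldots,\omega^{(k+1-m)})
=\frac{\omega'^{m}\,\Phi^{k-m}}{n^{k-m}(n+1)^{k-m}}\sum\frac{k!}{\ell_1!\cdots\ell_{k+1-m}!},
\]
where the remaining sum ranges over the same sequences constrained by \eqref{someq1}.

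\medskip

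The final step, which I expect to be the main (though routine) ingredient, is to identify this residual multinomial sum with the Lah number. Comparing with \eqref{bell2}, the sum is exactly $B_{k,m}(1!,2!,3!,\ldots,(k-m+1)!)$, since choosing $x_i=i!$ kills the product $\prod_i (x_i/i!)^{\ell_i}$. The classical generating-function/combinatorial identity
\[
B_{k,m}(1!,2!,\ldots,(k-m+1)!)\;=\;L(k,m)\;=\;\binom{k-1}{m-1}\frac{k!}{m!}
\]
(see e.g.\ Comtet, or \cite{K21}) then gives the sum as $L(k,m)$. Combining this with the extracted common factor and recalling the definition $\tilde{B}_{k,m}=L(k,m)/(n^{k-m}(n+1)^{k-m})$ from \eqref{bellnew} completes the proof.
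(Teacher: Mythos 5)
Your proof is correct and follows essentially the same route as the paper: substitute the rewritten derivatives from Corollary \ref{comCor2} into the explicit Bell polynomial formula, use the constraints \eqref{someq1} to pull out the common factor $\omega'^m\Phi^{k-m}/(n^{k-m}(n+1)^{k-m})$, and identify the residual multinomial sum with the Lah number $L(k,m)$. Your identification of that sum as $B_{k,m}(1!,2!,\ldots)$ via the classical identity is in fact slightly more explicit than the paper's bare assertion of the same equality.
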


Using Lemma \ref{comLem4} we can rewrite Fa\`a di Bruno's formula as 
\begin{equation}\label{newformula}
\dfrac{d^k(\bfq(\omega))}{dt^k}=\sum_{m=1}^{k}{\tilde{B}_{k,m}{\omega'}^{m}\Phi^{k-m}\bfq^{(m)}(\omega)}.
\end{equation}
We will use this in the next substep. Additionally, notice that $\tilde{B}_{k,k}=1$; also, we will assume the convention that $\tilde{B}_{k,m}=0$ when $k<m$.

\subsection{Substep \emph{(ii.2)}} 

The goal of this substep is to expand the $I_i(\bfq(\omega))$, for $i=1,\ldots,n-1$, defined in Eq. \eqref{first-inv}. Since from Lemma \ref{deltaq} we already know $\Delta(\bfq(\omega))$, we need to analyze, for $i\neq n$, 
\begin{equation}\label{As-again}
A_i(\bfq(\omega))=\left\Vert \underbrace{\dfrac{d(\bfq(\omega))}{dz},\ldots,\dfrac{d^{i-1}(\bfq(\omega))}{dz^{i-1}}}_{i-1},\dfrac{d^{n+1}(\bfq(\omega))}{dz^{n+1}},\underbrace{\dfrac{d^{i+1}(\bfq(\omega))}{dz^{i+1}},\ldots,\dfrac{d^{n}(\bfq(\omega))}{dz^{n}}}_{n-i}\right\Vert.
\end{equation}

By using Fa\`a di Bruno's formula in Eq. \eqref{newformula}, we know that each column, i.e. each derivative, in Eq. \eqref{As-again} is a linear combination of derivatives, where the number of terms is equal to the order of the derivative. Furthermore, we observe that for the first $i-1$ columns, only the last term, i.e. the term involving the highest derivative, matters, since the other terms lead to vanishing determinants when expanding $A_i(\bfq(\omega))$ as a sum of determinants. Thus, we can write

\begin{equation}\label{As-again2}
A_i(\bfq(\omega))=\left\Vert \underbrace{\omega'\bfq'(\omega),\ldots,(\omega')^{i-1}\bfq^{(i-1)}(\omega)}_{i-1},\dfrac{d^{n+1}(\bfq(\omega))}{dz^{n+1}},\underbrace{\dfrac{d^{i+1}(\bfq(\omega))}{dz^{i+1}},\ldots,\dfrac{d^{n}(\bfq(\omega))}{dz^{n}}}_{n-i}\right\Vert.
\end{equation}

Next let us express Eq. \eqref{As-again2} as a sum of determinants, using Fa\`a di Bruno's formula in Eq. \eqref{newformula}. In order to do this, we consider the following two subsets: 
\begin{itemize}
\item ${\mathcal P}$ represents the set consisting of the permutations of $\{i,i+1,\ldots,n\}$. Notice that $\#\{i,i+1,\ldots,n\}=n-i+1$, where $\#$ denotes here the cardinal of a set. 
\item ${\mathcal Q}$ represents the set consisting of the combinations of $n-i$ items from $\{i,i+1,\ldots,n\}$; thus, each element of ${\mathcal Q}$ skips exactly one element, $r\in\{i,i+1,\ldots,n\}$. We will denote, also, the set of the permutations of the set $\{i,i+1,\ldots,n\}-\{r\}$ by ${\mathcal P}_r$.  
\end{itemize}

Now when expanding $A_i(\bfq(\omega))$ as a sum of determinants, and ignoring vanishing determinants, we get sums of terms like
\begin{equation}\label{distinctterms}
\left \Vert \omega'\bfq'(\omega),\ldots,(\omega')^{i-1}\bfq^{(i-1)}(\omega), \Box_{\ell_1}\bfq^{(\ell_1)}(\omega),\Box_{\ell_2}\bfq^{(\ell_2)}(\omega),\ldots,\Box_{\ell_{n-i+1}}\bfq^{(\ell_{n-i+1})}(\omega)\right\Vert,
\end{equation}
where $1,\ldots,i-1,\ell_1,\ell_2,\ldots,\ell_{n-i+1}$ are $n$ different numbers (otherwise the determinant is zero), the $\Box_{\ell_k}$ are coefficients to be discussed, and $\{1,\ldots,i-1,\ell_1,\ell_2,\ldots,\ell_{n-i+1}\}$ is a subset of $\{1,2,\ldots,i-1,i,i+1,\ldots,n,n+1\}$. We are going to split the determinants in Eq. \eqref{distinctterms} into two different groups, according to $\ell_1\neq n+1$ or $\ell_1=n+1$. The sums of the determinants of each group are denoted $A^{(I)}_i(\bfq(\omega))$, $A^{(II)}_i(\bfq(\omega))$, so $A_i(\bfq(\omega))=A^{(I)}_i(\bfq(\omega))+A^{(II)}_i(\bfq(\omega))$.

\vspace{0.3 cm}
\noindent\fbox{$\ell_1\neq n+1$}

\vspace{0.3 cm}
Here we have the determinants where we do not pick the term in $\bfq^{(n+1)}(\omega)$ of the expansion, using Fa\`a di Bruno's formula, of $\dfrac{d^{n+1}(\bfq(\omega))}{dz^{n+1}}$, the $i$-th colum of Eq. \eqref{As-again2}. In this case, $\{\ell_1,\ell_2,\ldots,\ell_{n-i+1}\}\in {\mathcal P}$; thus, the columns of the determinant in Eq. \eqref{distinctterms} coincide, up to a permutation, with the columns of $\Delta \bfq(\omega)$, so Eq. \eqref{distinctterms} is equal to 
\begin{equation}\label{det1}
\alpha \Vert \bfq'(\omega),\ldots,\bfq^{(i-1)}(\omega),\bfq^{(\ell_1)}(\omega),\ldots,\bfq^{(\ell_{n-i+1})}(\omega)\Vert =\alpha (-1)^\sigma \Delta \bfq(\omega),
\end{equation}
where $\alpha$ is a coefficient to be discussed, and $(-1)^\sigma$ denotes the signature of the permutation 
\[
\{i,i+1,\ldots,n\} \stackrel{\sigma}{\to} \{\ell_1,\ell_2,\ldots,\ell_{n-i+1}\}.
\]
We have exactly $\#{\mathcal P}$ terms like this in the expansion of $A_i(\bfq(\omega))$. 

Let us now discuss the value of the coefficient $\alpha$. In Eq. \eqref{distinctterms}, for $k=1,\ldots,i-1$ each derivative $\bfq^{(k)}(\omega)$ appears multiplied by $(\omega')^k$. Additionally, from Eq. \eqref{newformula}, we deduce that the derivative $\bfq^{(\ell_1)}(\omega)$ appears multiplied by $\tilde{B}_{n+1,\ell_1}\Phi^{(n+1)-\ell_1}$, the derivative $\bfq^{(\ell_2)}(\omega)$ appears multiplied by $\tilde{B}_{i+1,\ell_2}\Phi^{(i+1)-\ell_2}$, etc. and the derivative $\bfq^{\ell_{n-i+1}}(\omega)$ appears multiplied by $\tilde{B}_{n,\ell_{n-i+1}}\Phi^{n-\ell_{n-i+1}}$. Taking these constants out of the determinant in Eq. \eqref{distinctterms}, we get that 
\[
\alpha= (\omega')^{\frac{n(n+1)}{2}} \tilde{B}_{n+1,\ell_1}\tilde{B}_{i+1,\ell_2}\cdots \tilde{B}_{n,\ell_{n-i+1}} \Phi^{(n+1)-i}.
\]
Here we need to take into account that: 
\begin{itemize}
\item [(1)] The power of $\omega'$ in the above expression comes from 
\[
\omega'(\omega')^2\cdots (\omega')^{i-1}(\omega')^{\ell_1}\cdots (\omega)^{\ell_{n-i+1}}.
\]
Since $\{\ell_1,\ell_2,\ldots,\ell_{n-i+1}\}\in {\mathcal P}$, the sum of $1,2,\ldots,i-1,\ell_1,\ldots,\ell_{n-i+1}$ coincides with the sum of the first $n$ natural numbers. 
\item [(2)] The power of $\Phi$ is the result of the sum 
\begin{equation}\label{sumN}
[(n+1)-\ell_1]+[(i+1)-\ell_2]+\cdots +[n-\ell_{n-i+1}]=n+1-i.
\end{equation}
\end{itemize}
Thus, when summing over the elements of ${\mathcal P}$, the sum of the determinants in the expansion of $A_i(\bfq(\omega))$ with $\ell_1\neq n+1$ yields 
\begin{equation}\label{firsterm}
A^{(I)}_i(\bfq(\omega))=\sum_{\sigma \in {\mathcal P}}(-1)^{\sigma} (\omega')^{\frac{n(n+1)}{2}} \tilde{B}_{n+1,\ell_1}\tilde{B}_{i+1,\ell_2}\cdots \tilde{B}_{n,\ell_{n-i+1}} \Phi^{(n+1)-i} \Delta \bfq(\omega),
\end{equation}
which we can also write as
\begin{equation}\label{secondterm}
A^{(I)}_i(\bfq(\omega))=(\omega')^{\frac{n(n+1)}{2}} \Phi^{(n+1)-i} \Delta \bfq(\omega) \underbrace{\sum_{\sigma \in {\mathcal P}}(-1)^{\sigma} \tilde{B}_{n+1,\ell_1}\tilde{B}_{i+1,\ell_2}\cdots \tilde{B}_{n,\ell_{n-i+1}}}.
\end{equation}

The underbraced expression corresponds exactly to the definition of an $(n-i+1)\times (n-i+1)$ determinant, whose $j$-th column consists of the values of $\tilde{B}_{n+1,n+1-j},\tilde{B}_{n,n+1-j},\ldots,\tilde{B}_{i+1,n+1-j}$, where $\tilde{B}_{k,m}=0$ when $k<m$, and $\tilde{B}_{k,k}=1$. We call this determinant $M^{n+1,i+1}$, so   
\begin{equation}\label{thirdterm}
A^{(I)}_i(\bfq(\omega))=(\omega')^{\frac{n(n+1)}{2}} \Phi^{(n+1)-i} \Delta \bfq(\omega) M^{n+1,i+1}.
\end{equation}

\vspace{0.3 cm}
\noindent\fbox{$\ell_1= n+1$}

\vspace{0.3 cm}
Here we have the determinants where we pick the term in $\bfq^{(n+1)}(\omega)$ of the expansion, using Fa\`a di Bruno's formula, of $\dfrac{d^{n+1}(\bfq(\omega))}{dz^{n+1}}$, the $i$-th colum of Eq. \eqref{As-again2}. In other words, we have determinants like Eq. \eqref{distinctterms}, where the $i$-th column is $(\omega')^{n+1}\bfq^{(n+1)}(\omega)$, and $\{\ell_2,\ldots, \ell_{n-i+1}\}\in {\mathcal Q}$, so Eq. \eqref{distinctterms} is equal to 
\begin{equation}\label{det2}
\beta \Vert \bfq'(\omega),\ldots,\bfq^{(i-1)}(\omega),\bfq^{(n+1)}(\omega),\bfq^{(\ell_2)},\ldots,\bfq^{(\ell_{n-i+1})}(\omega)\Vert,
\end{equation}
where $\alpha$ is a coefficient to be discussed. Since each combination of ${\mathcal Q}$ skips exactly one element of $\{i,i+1,\ldots,n\}$, say $r$, the value that we have in the above determinant, leaving $\alpha$ aside, coincides with $A_r(\bfq)(\omega)$ up to $(-1)^{r}(-1)^\sigma$. The factor $(-1)^{r}$ comes from the fact that to get $A_r(\bfq)(\omega)$ we need to take $\bfq^{(n+1)}(\omega)$ to the $\ell_r$-th column, while the factor $(-1)^\sigma$ denotes the signature of a permutation of the elements in $\{i,i+1,\ldots,n\}-\{r\}$ so that we reach exactly $A_r(\bfq)(\omega)$. Thus, Eq. \eqref{det2} can be written as 
\begin{equation}\label{det3}
\beta (-1)^{r}(-1)^\sigma A_r(\bfq)(\omega).
\end{equation}

To discuss the value of the coefficient $\beta$, we argue as in the case $\ell_1\neq n+1$. The difference is that in Eq. \eqref{distinctterms}, using Eq. \eqref{newformula} we have $\ell_1=n+1$ and the derivative $\bfq^{(n+1)}(\omega)$ appears multiplied by $(\omega')^{n+1}\tilde{B}_{n+1,n+1}$. Again arguing as in the case $\ell_1\neq n+1$, we get that
\[
\beta= (\omega')^L \tilde{B}_{n+1,n+1}\tilde{B}_{i+1,\ell_2}\cdots \tilde{B}_{n,\ell_{n-i+1}} \Phi^N.
\]
The power $L$ equals the sum of $1,2,\ldots,n+1$ minus $r$, so
\[
L=\dfrac{n(n+1)}{2}+n+1-r.
\]
On the other hand, 
\[
N=[(i+1)-\ell_2]+\cdots+[n-\ell_{n-i+1}]=[(i+1)+\cdots +n]-[\ell_2+\cdots+\ell_{n-i+1}].
\]
Since $\{\ell_2,\ldots, \ell_{n-i+1}\}\in {\mathcal Q}$, and the list $\{\ell_2,\ldots, \ell_{n-i+1}\}$ is the result of removing $r$ from the list $\{i,i+1,\ldots,n\}$ and reordering it, $\ell_2+\cdots+\ell_{n-i+1}=[i+(i+1)+\cdots +n]-r$, and hence $N=r-i$. Thus, Eq. \eqref{det3} turns into
\begin{equation}\label{det4}
(\omega')^{\frac{n(n+1)}{2}} (-1)^{r} (\omega')^{n+1-r} \Phi^{r-i} A_r(\bfq)(\omega) (-1)^\sigma \tilde{B}_{n+1,n+1}\tilde{B}_{i+1,\ell_2}\cdots \tilde{B}_{n,\ell_{n-i+1}}.
\end{equation}

Now to get the sum of all the determinants corresponding to $\ell_1=n+1$, i.e. $A^{(II)}_i(\bfq(\omega))$, we need to sum Eq. \eqref{det4} over the permutations ${\mathcal P}_r$, and then over the combinations ${\mathcal Q}$. However, this last sum is nothing else than the sum over $r$, from $r=i$ to $r=n$. Thus, we get that 
\begin{equation}\label{det5}
A^{(II)}_i(\bfq(\omega))= (\omega')^{\frac{n(n+1)}{2}} \sum_{r=i}^n  (-1)^{r} (\omega')^{n+1-r} \Phi^{r-i} A_r(\bfq)(\omega) \underbrace{\sum_{\sigma \in{\mathcal P}_r} (-1)^\sigma \tilde{B}_{n+1,n+1}\tilde{B}_{i+1,\ell_2}\cdots \tilde{B}_{n,\ell_{n-i+1}}}.
\end{equation}

The underbraced expression corresponds, as in the case $\ell_1\neq n+1$, to an $(n-i+1)\times (n-i+1)$ determinant $M^{n+1,i+1}_r$ defined in the following way:
\begin{itemize}
\item If $j<r$, the $j$-th column of $M^{n+1,i+1}_r$ consists of the values $\tilde{B}_{n+1,n+1-j},\tilde{B}_{n,n+1-j},\ldots,\tilde{B}_{i+1,n+1-j}$. 
\item If $j\geq r$, the $j$-th column of $M^{n+1,i+1}_r$ consists of the values $\tilde{B}_{n+1,n-j},\tilde{B}_{n,n-j},\ldots,\tilde{B}_{i+1,n-j}$, 
\end{itemize}
where we recall that $\tilde{B}_{k,k}=1$, and $\tilde{B}_{k,m}=0$ when $k<m$. We will assume the convention 
\begin{equation}\label{M1}
M^{n+1,i+1}_0=M^{n+1,i+1},\mbox{ and }M^{n+1,i+1}_{n+1}=(-1)^n,
\end{equation}
where $M^{n+1,i+1}$, without any subindex, was the underbraced determinant in Eq. \eqref{secondterm}. Furthermore, one can check that 
\begin{equation}\label{M2}
M_{n-i+1}^{n+1,i+1}=M^{n,i+1}.
\end{equation}

Therefore, 
\begin{equation}\label{fourthterm}
A^{(II)}_i(\bfq(\omega))=(\omega')^{\frac{n(n+1)}{2}} \sum_{r=i}^n  (-1)^{r} (\omega')^{n+1-r} \Phi^{r-i} A_r(\bfq)(\omega)M^{n+1,i+1}_r.
\end{equation}

Since $A_i(\bfq(\omega))=A^{(I)}_i(\bfq(\omega))+A^{(II)}_i(\bfq(\omega))$, finally we get

\begin{equation}\label{fifthterm}
A_i(\bfq(\omega))=(\omega')^{\frac{n(n+1)}{2}} \Phi^{(n+1)-i} \Delta \bfq(\omega) M^{n+1,i+1}+(\omega')^{\frac{n(n+1)}{2}} \sum_{r=i}^n  (-1)^{r} (\omega')^{n+1-r} \Phi^{r-i} A_r(\bfq)(\omega) M^{n+1,i+1}_r,
\end{equation}
and to find $I_i(\bfq(\omega))$, which was the goal of this substep, we just need to divide Eq. \eqref{fifthterm} by $\Delta(\bfq(\omega))$. Taking Lemma \ref{deltaq} into account, we deduce the following result. 

\begin{lemma}\label{developIi}
\begin{equation}\label{sixthterm}
I_i(\bfq(\omega))=\Phi^{(n+1)-i}M^{n+1,i+1}+\sum_{r=i}^n  (-1)^{r} (\omega')^{n+1-r} \Phi^{r-i} I_r(\bfq)(\omega) M^{n+1,i+1}_r.
\end{equation}
\end{lemma}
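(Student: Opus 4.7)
The plan is to assemble the two pieces $A_i^{(I)}(\bfq(\omega))$ and $A_i^{(II)}(\bfq(\omega))$ already computed in Eq. \eqref{thirdterm} and Eq. \eqref{fourthterm} into a formula for $A_i(\bfq(\omega))$, and then to divide by $\Delta(\bfq(\omega))$, using Lemma \ref{deltaq} to clean up the denominator. Since the definition in Eq. \eqref{first-inv} gives $I_i(\bfu)=A_i(\bfu)/\Delta(\bfu)$, the lemma is essentially a corollary of this assembly once the correct identification of the coefficients as the determinants $M^{n+1,i+1}$ and $M^{n+1,i+1}_r$ is in place.

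First I would substitute $\bfu=\bfq(\omega)$ in the definition of $I_i$, so that by Lemma \ref{deltaq} the denominator becomes $(\omega')^{n(n+1)/2}\Delta\bfq(\omega)$. For the numerator, I would expand each column of $A_i(\bfq(\omega))$ in Eq. \eqref{As-again} using Fa\`a di Bruno's formula in the form of Eq. \eqref{newformula}, discarding all resulting determinants with repeated derivative orders (which vanish), and splitting the surviving determinants into two groups according to whether the $i$-th column contributes $\bfq^{(n+1)}(\omega)$ (case $\ell_1=n+1$) or not (case $\ell_1\neq n+1$).

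In the case $\ell_1\neq n+1$, the columns reorder the full list $\bfq'(\omega),\ldots,\bfq^{(n)}(\omega)$, so each surviving determinant is $\pm\Delta\bfq(\omega)$; collecting the signed coefficients over permutations of $\{i,i+1,\ldots,n\}$ then gives the determinant $M^{n+1,i+1}$, and one obtains Eq. \eqref{thirdterm}. In the case $\ell_1=n+1$, exactly one index $r\in\{i,i+1,\ldots,n\}$ is skipped, so the surviving determinant equals $(-1)^r(-1)^\sigma A_r(\bfq)(\omega)$, the factor $(-1)^r$ accounting for moving $\bfq^{(n+1)}(\omega)$ back into its natural position, and the permutation sum over $\mathcal{P}_r$ of the products of $\tilde{B}_{k,m}$ yields the determinant $M^{n+1,i+1}_r$, giving Eq. \eqref{fourthterm}. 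Summing the two contributions yields Eq. \eqref{fifthterm}, and dividing by $(\omega')^{n(n+1)/2}\Delta\bfq(\omega)$ produces Eq. \eqref{sixthterm}.

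The main obstacle is the careful bookkeeping of powers of $\omega'$ and of $\Phi$ in the second case, which I would verify as follows. Since $\{\ell_2,\ldots,\ell_{n-i+1}\}$ is obtained by removing $r$ from $\{i,i+1,\ldots,n\}$, the identity $\ell_2+\cdots+\ell_{n-i+1}=[i+(i+1)+\cdots+n]-r$ shows that the exponent of $\Phi$ accumulated from the $\tilde{B}_{k,m}\Phi^{k-m}$ factors equals $r-i$, matching Eq. \eqref{sumN} in the appropriate variant. Similarly, summing the exponents of $\omega'$ coming from the first $i-1$ columns, from $(\omega')^{n+1}$ in the $i$-th column, and from the $\tilde{B}_{k,m}(\omega')^m$ factors in the remaining $n-i$ columns, gives total exponent $\tfrac{n(n+1)}{2}+n+1-r$; after cancelling $(\omega')^{n(n+1)/2}$ against the denominator, the factor $(\omega')^{n+1-r}$ that appears in Eq. \eqref{sixthterm} is obtained. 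Once these two accountings are confirmed, the claimed closed form follows immediately.
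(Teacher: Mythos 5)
Your proposal is correct and follows essentially the same route as the paper: expand each column of $A_i(\bfq(\omega))$ via the rewritten Fa\`a di Bruno formula \eqref{newformula}, split the surviving determinants according to whether the $i$-th column contributes $\bfq^{(n+1)}(\omega)$, identify the signed coefficient sums with the determinants $M^{n+1,i+1}$ and $M^{n+1,i+1}_r$, and divide by $\Delta(\bfq(\omega))$ using Lemma \ref{deltaq}. Your bookkeeping of the exponents of $\omega'$ and $\Phi$ matches the paper's computation of $L$ and $N$ exactly.
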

\noindent
Here we recall, from Eq. \eqref{newnota}, that $\Phi:=K_n-\omega' G_n$, where $K_n:=I_n(\bfq(\omega))$, $G_n:=I_n(\bfq)(\omega)$.

\subsection{Substep \emph{(ii.3)}} 

In this last substep, we will make use of Eq. \eqref{sixthterm}, for $i=1,\ldots,n-1$, to get rid of $\omega'$ and find M\"obius-commuting invariants. Notice that because of the notation introduced in Eq. \eqref{newnota}, when expanding the powers of $\Phi$ we get powers of $\omega'$ as well. We want to reach an expression where the coefficient of each power of $\omega'$ is explicit. To do that, we first express Eq. \eqref{sixthterm} as one sum, using the convention in Eq. \eqref{M1}; here we recover the notation in Eq. \eqref{newnota}, and assume by convention $G_{n+1}:=-1$. Thus,

\begin{equation}\label{seventhterm}
K_i=\sum_{r=i}^{n+1}  (-1)^{r} (\omega')^{n+1-r} (K_n-\omega'G_n)^{r-i} G_r M^{n+1,i+1}_r.
\end{equation}
Calling $p=n-r+1$, we get 
\begin{equation}\label{8thterm}
K_i=\sum_{p=0}^{n-i+1}  (-1)^{n+1-p} (\omega')^{p} (K_n-\omega'G_n)^{n+1-i-p} G_{n+1-p} M^{n+1,i+1}_{n+1-p}.
\end{equation}
Reordering the sum above and using the binomial expansion for $(K_n-s'G_n)^{j}$, we reach
\begin{equation}\label{Ks}
K_i=\sum_{j=0}^{n-i+1}{\sum_{k=0}^{j}{(-1)^k\binom{j}{k}M_j^{n+1,i+1}{\omega'}^{n-i+1-k}G_n^{j-k}K_n^{k}G_{j+i}}},
\end{equation}
where we recall that $G_{n+1}=-1$. 

\vspace{0.3 cm}
Next we need to reorder the indexes above so that the coefficient of the powers of $\omega'$ is explicit. We can do it by rearranging the indices $j$ and $k$ as follows
\begin{equation}\label{Kslast}
K_i=\sum_{k=0}^{n-i+1}{(-1)^k K_n^k \omega'^{n-i+1-k}\sum_{j=k}^{n-i+1}{\binom{j}{k}M_j^{n+1,i+1}G_n^{j-k}G_{j+i}}}.
\end{equation}

We observe that for each $i$, where recall that $i=1,2,\ldots,n-1$, the degree of $K_i$ as a polynomial in $\omega'$ is $n-i+1$. In particular, the degree of $K_1$ is $n$, and the degree of $K_{n-1}$ is $2$. Considering 
\[
\alpha_1:=\omega',\alpha_2:=(\omega')^2,\ldots,\alpha_n:=(\omega')^n,
\]
we notice that when collecting Eq. \eqref{Kslast} for $i=1,\ldots,n-1$, we get a linear system ${\mathcal S}$ in $\alpha_1,\alpha_2,\ldots, \alpha_n$ consisting of $n$ unknowns and $n-1$ equations. However, one can prove that for all $i$ the coefficient of $\alpha_1:=\omega'$ is always zero, i.e., $K_i$ does not depend on $\alpha_1:=\omega'$: indeed, from the formula in Eq. \eqref{Kslast}, for any $i$, the coefficient of $\omega'$ corresponds to the indices $k=n-i$ and $j=n-i,j=n-i+1$, which yields
\[
(-1)^{n-i}G_nK_n^{n-i}\left( M_{n-i}^{n+1,i+1}-(n-i+1)M_{n-i+1}^{n+1,i+1}\right).
\]
But this number is zero, as stated in the following lemma, which is proven in Appendix I.

\begin{lemma}\label{comLem5}
$M_{n-i}^{n+1,i+1}-(n-i+1)M_{n-i+1}^{n+1,i+1}=0$.
\end{lemma}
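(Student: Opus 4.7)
The plan is to translate the claim into an identity between two binomial determinants. Using $\tilde B_{k,m}=\binom{k-1}{m-1}\frac{k!}{m!}c^{m-k}$ with $c=n(n+1)$, I would factor the row scalings $r_p!/c^{r_p}$ and column scalings $c^{c_q}/c_q!$ out of every determinant that defines $M_{n-i}^{n+1,i+1}$ and $M_{n-i+1}^{n+1,i+1}$; this turns each into a product of scalar factors times a determinant with pure binomial entries $\binom{r-1}{c-1}$. In the ratio $M_{n-i}^{n+1,i+1}/M_{n-i+1}^{n+1,i+1}$ the scalar prefactors telescope, so the problem reduces to evaluating two binomial determinants.

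The two matrices in question---call them $\hat B_n$ (with rows $(i+1,\ldots,n)$ and columns $(i,\ldots,n-1)$) and $\tilde A$ (with rows $(i+1,\ldots,n,n+1)$ and columns $(i,\ldots,n)$), both with $(p,q)$-entry $\binom{r_p-1}{c_q-1}$---are lower Hessenberg with superdiagonal $1$'s, since $\binom{r-1}{c-1}=0$ for $c>r$. Laplace expansion along the first row, whose only nonzero entries are $\binom{i}{i-1}=i$ and $\binom{i}{i}=1$, sets up an induction on $n-i$ whose conclusion is
\[
\det \hat B_n=\binom{n-1}{i-1},\qquad \det \tilde A=\binom{n}{i-1}.
\]
Combining with the scaling factors and the elementary identity $\binom{n}{i-1}=\tfrac{n}{n-i+1}\binom{n-1}{i-1}$ gives $M_{n-i}^{n+1,i+1}/M_{n-i+1}^{n+1,i+1}=n-i+1$, proving the lemma.

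The main obstacle is the inductive step for the binomial determinants: Laplace expansion along the first row produces, besides the straightforward minor of the same shape with $i$ replaced by $i+1$, an auxiliary minor in which the column labeled $i+1$ is skipped, so it is not itself of the ``consecutive rows and columns'' shape. Reducing it to the standard shape requires a separate Pascal-identity manipulation or a nested induction on $n-i$. A secondary obstacle is the sign bookkeeping that relates the permutation-sum formula for $M_r^{n+1,i+1}$ in Eq.~\eqref{secondterm} to the column-modification definition used in Eq.~\eqref{M1}, and in particular the cyclic reordering sign $(-1)^{n-i}$ implicit in the convention $M_{n+1}^{n+1,i+1}=(-1)^nM^{n+1,i+1}$; once these signs cancel consistently, the binomial calculation closes.
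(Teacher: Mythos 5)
Your proposal is correct, but it takes a genuinely different route from the paper's proof. The paper never evaluates the two determinants: since the matrices differ only in their first column, it merges $M_{n-i}^{n+1,i+1}-(n-i+1)M_{n-i+1}^{n+1,i+1}$ into a single determinant and shows it vanishes by exhibiting its last column as a linear combination of the previous ones, solving a triangular subsystem for the coefficients and leaving the verification of the one remaining equation as a ``lengthy, but direct exercise''. You instead compute both determinants in closed form: the row/column rescaling with $c=n(n+1)$ is legitimate, the reduced matrices are exactly the two consecutive-binomial matrices you describe, and their determinants are indeed $\binom{n-1}{i-1}$ and $\binom{n}{i-1}$; tracking the prefactors gives $M_{n-i}^{n+1,i+1}=\frac{n!}{i!}\,c^{\,i-n}\binom{n-1}{i-1}$ and $M_{n-i+1}^{n+1,i+1}=\frac{(n+1)!}{i!}\,c^{\,i-n-1}\binom{n}{i-1}$, so the quotient is $n\binom{n-1}{i-1}/\binom{n}{i-1}=n-i+1$ exactly as you claim (the division is harmless since $\binom{n}{i-1}\neq 0$). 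Thus your route proves strictly more---explicit values of the $M$'s---while the paper's route avoids any determinant evaluation at the cost of deferring its own final verification to the reader; in that sense the two arguments are comparably complete. The only genuine loose end is the one you flag, namely the evaluation $\det\bigl[\binom{a+p}{a-1+q}\bigr]_{p,q=0}^{m-1}=\binom{a+m-1}{m}$ (here $a=i$ and $m=n-i$ or $n-i+1$): your first-row Laplace recursion does create a skipped-column minor that needs separate treatment, but this determinant is classical and can be closed at once by the Lindstr\"om--Gessel--Viennot lemma or by the standard product formula for determinants of binomial coefficients with consecutive parameters, so the obstacle is routine rather than structural. Your ``secondary obstacle'' about sign bookkeeping is immaterial for this lemma: both matrices are the explicit ones written out in Appendix I, and reversing both the row order and the column order leaves a determinant unchanged, so your increasing-index convention computes the same numbers.
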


\vspace{0.3 cm}
Now we are almost ready to find our M\"obius-commuting invariants. Because of Lemma \ref{comLem5}, the system ${\mathcal S}$ is in fact a system with $n$ unknowns, namely $\alpha_2,\ldots,\alpha_n$. Furthermore, ${\mathcal S}$ is upper triangular. Using back substitution, we find the solution of ${\mathcal S}$ as
\begin{equation}\label{solution}
\alpha_k:=(\omega')^{k}=\dfrac{\sum_{i=0}^{k}{M_i^{n+1,n+2-k}K_{n-k+1+i}K_n^{i}}}{\sum_{i=0}^{k}{M_i^{n+1,n-k+2}G_{n-k+1+i}G_n^{i}}},\quad 2\leq k\leq n, 
\end{equation}
where we assume that $K_{n+1}=G_{n+1}=-1$. Note that, for all $k\in\{2,3,\ldots,n\}$, $\alpha_k$ is a rational function of the $K_j,G_j$ whose numerator only depends on the $K_j$ and the denominator only depends on the $G_j$.

\vspace{0.3 cm}
Next we need to invoke the additional invariant $I_0$, introduced in Eq. \eqref{I3}, that we did not use so far. The expansion of $I_0(\bfq(\omega))$ can be directly found as in the case of $I_n(\bfq(\omega))$; denoting $K_0:=I_0(\bfq(\omega))$, $G_0:=I_0(\bfq)(\omega)$, we get that

\begin{equation}\label{K0}
K_0=\dfrac{1}{2}\dfrac{n+2}{n}K_n^2+\omega'^2(G_0-\dfrac{1}{2}\dfrac{n+2}{n}).
\end{equation}

\noindent
Now if we isolate $(\omega')^2$ in Eq. \eqref{K0}, introduce $k=2$ in Eq. \eqref{solution} and set the obtained expressions to be equal, we get that 

\begin{equation}\label{firstfunc}
\dfrac{K_0-\dfrac{1}{2}\dfrac{n+2}{n}K_n^2}{K_{n-1}+M_2^{n+1,n}K_n^2}=\dfrac{G_0-\dfrac{1}{2}\dfrac{n+2}{n}G_n^2}{G_{n-1}+M_2^{n+1,n}G_n^2}.
\end{equation}

\noindent
Getting back to the notation $K_i:=I_i(\bfq(\omega))$ and $G_i:=I_i(\bfq)(\omega)$, we can rewrite Eq. \eqref{firstfunc} as
\begin{equation}\label{firstinv}
\dfrac{I_0(\bfq(\omega))-\dfrac{1}{2}\dfrac{n+2}{n}I_n^2(\bfq(\omega))}{I_{n-1}(\bfq(\omega))+M_2^{n+1,n}I_n^2(\bfq(\omega))}=\dfrac{I_0(\bfq)(\omega)-\dfrac{1}{2}\dfrac{n+2}{n}I_n^2(\bfq)(\omega)}{I_{n-1}(\bfq)(\omega)+M_2^{n+1,n}I_n^2(\bfq)(\omega)}.
\end{equation}

But Eq. \eqref{firstinv} is expressing, exactly, that 
\begin{equation}\label{function}
F_1(\bfq):=\dfrac{I_0(\bfq)-\dfrac{1}{2}\dfrac{n+2}{n}I_n^2(\bfq)}{I_{n-1}(\bfq)+M_2^{n+1,n}I_n^2(\bfq)}
\end{equation} 
is M\"obius-commuting, i.e. that $F_1(\bfq(\omega))=F_1(\bfq)(\omega)$. We can find other invariants in a similar way, by considering $k=3,\ldots,n$ in Eq. \eqref{solution} and eliminating $\omega'$ with the help of Eq. \eqref{K0}. So we have proven the following theorem. 

\begin{theorem}\label{egregium}
Let $F_1(\bfq)$ be the expression in Eq. \eqref{function}, and for $k=3,\ldots,n$, let 
\begin{equation}\label{symfunc}
F_{k-1}(\bfq):=\dfrac{\left(\sum_{i=0}^{k}{M_i^{n+1,n-k+2}I_{n-k+1+i}(\bfq)I_n^{i}(\bfq)}\right)^{e_k/k}}{\left( I_{n-1}(\bfq)+M_2^{n+1,n}I_n^2(\bfq)\right)^{e_k/2}},\quad 3\leq k\leq n,
\end{equation}
where $e_k$ is, for $3\leq k\leq n$, the least common multiple of $2,k$, i.e. $e_k=\emph{lcm}(2,k)$. Then, the $F_\ell$, for $\ell=1,\ldots,n-1$, are M\"obius-commuting, i.e. $F_\ell(\bfq(\omega))=F_\ell(\bfq)(\omega)$. 
\end{theorem}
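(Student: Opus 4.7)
The plan is to prove the Möbius-commuting property by eliminating the remaining free quantity, $\omega'$, from two independent expressions for each of its powers $(\omega')^k$. The heart of the argument is already set up by the preceding substeps: Eq. \eqref{solution} provides, for $2\le k\le n$, a formula for $(\omega')^k$ as a rational function whose numerator depends only on the $K_j:=I_j(\bfq(\omega))$ and whose denominator has \emph{the same functional form} but evaluated at the $G_j:=I_j(\bfq)(\omega)$. Additionally, the auxiliary invariant $I_0$ supplies, via Eq. \eqref{K0}, an independent expression for $(\omega')^2$. So the strategy is: equate pairs of expressions for the same power of $\omega'$, and rearrange the resulting equality so that every $K$-dependent quantity sits on one side and every $G$-dependent quantity on the other.

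For $\ell=1$, the plan is to equate the two available expressions for $(\omega')^2$, namely the one from Eq. \eqref{K0} and the one from Eq. \eqref{solution} with $k=2$. Cross-multiplying and grouping like-terms immediately separates the $K_j$'s from the $G_j$'s, yielding Eq. \eqref{firstinv}, which is precisely $F_1(\bfq(\omega))=F_1(\bfq)(\omega)$ for $F_1$ as in Eq. \eqref{function}.

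For $\ell=k-1$ with $k\in\{3,\dots,n\}$, the plan is to compare two expressions for $(\omega')^{e_k}$, where $e_k=\textup{lcm}(2,k)$ is exactly chosen so that $e_k/k$ and $e_k/2$ are both integers. Writing $H_k(X)=\sum_{i=0}^{k}M_i^{n+1,n-k+2}X_{n-k+1+i}X_n^{i}$, Eq. \eqref{solution} reads $(\omega')^k=H_k(K)/H_k(G)$ and $(\omega')^2=H_2(K)/H_2(G)$. Raising the first to the $(e_k/k)$-th power and the second to the $(e_k/2)$-th power gives two integer-exponent expressions for $(\omega')^{e_k}$; setting them equal and rearranging yields
\[
\frac{H_k(K)^{e_k/k}}{H_2(K)^{e_k/2}}\;=\;\frac{H_k(G)^{e_k/k}}{H_2(G)^{e_k/2}},
\]
whose left-hand side is $F_{k-1}(\bfq(\omega))$ and whose right-hand side is $F_{k-1}(\bfq)(\omega)$, giving the desired identity. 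The use of $e_k=\textup{lcm}(2,k)$ is essential here to keep all exponents integer and hence to keep the $F_{k-1}$ rational in the $I_j(\bfq)$.

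The main obstacle is purely bookkeeping. One must verify that the explicit denominator $I_{n-1}(\bfq)+M_2^{n+1,n}I_n^2(\bfq)$ stated in Eq. \eqref{function} and Eq. \eqref{symfunc} agrees (up to a scalar that cancels in the ratio) with $H_2$ evaluated at the $I_j(\bfq)$, using the convention $G_{n+1}=-1$ introduced after Eq. \eqref{seventhterm}. This reduces to unwinding the $k=2$ specialization of Eq. \eqref{solution} and checking a small identity among the combinatorial constants $M_0^{n+1,n},M_1^{n+1,n},M_2^{n+1,n}$. Once this identification is in hand, the rest of the proof is a mechanical equating-and-rearranging of the two expressions for $(\omega')^{e_k}$, and no further combinatorics is needed.
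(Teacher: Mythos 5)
Your proposal follows essentially the same route as the paper: both use the fact that Eq.~\eqref{solution} expresses $(\omega')^k$ as a ratio whose numerator depends only on the $K_j$ and whose denominator is the same form in the $G_j$, obtain $F_1$ by equating the two expressions for $(\omega')^2$ (from Eq.~\eqref{K0} and the $k=2$ case of Eq.~\eqref{solution}), and obtain $F_{k-1}$ by matching the $(e_k/k)$-th power of the $k$-th expression against the $(e_k/2)$-th power of the second, then separating the $K$-side from the $G$-side. The bookkeeping point you flag (identifying the stated denominator with the $k=2$ sum under the convention $G_{n+1}=-1$) is real but routine, and the paper glosses over it in the same way.
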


\section{Conclusion} \label{conclusion}

We have presented an algorithm, generalizing the algorithm in \cite{Gozutok}, to compute the affine equivalences, if any, between two parametric curves in any dimension. Our strategy relies on bivariate factoring, and avoids polynomial system solving. The algorithm works for rational curves and also non-algebraic parametric curves with meromorphic components, admitting a rational inverse. We have implemented the algorithm in {\tt Maple}, and evidence of its performance has been presented. 

The algorithm works whenever not all the M\"obius-commuting invariants are constant. This happens generically, but identifying the curves where this does not occur, as well as providing a solution to the problem for this special case, are questions that we pose here as open problems.

Additionally, in the case of non-algebraic curves, right now we need some hypotheses that are not always satisfied: for instance, planar curves like the cycloid, or the tractrix, or classical planar spirals, do not satisfy our hypotheses. However, we have observed that the algorithm seems to work also for many of those curves, which makes us think that our hypotheses could be relaxed. This requires more theoretical work regarding analytic curves. 

It would be desirable to extend our ideas to the case of rational surfaces$/$hypersurfaces. This probably requires some extra hypotheses, e.g. non-existence of base points or special types of surfaces$/$hypersurfaces, that allow us to guess the type of transformation that we have in the parameter space: such transformation would play a role similar to the role played by M\"obius transformations here. These are questions that we would like to address in the future. 

\section*{Acknowledgements}
Juan Gerardo Alc\'azar is supported by the grant PID2020-113192GB-I00 (Mathematical Visualization: Foundations, Algorithms and Applications) from the Spanish MICINN. Juan G. Alc\'azar is also a member of the Research Group {\sc asynacs} (Ref. {\sc ccee2011/r34}). Uğur Gözütok is supported by the grant $121C421$, in the scope of $2218$-National Postdoctoral Research Fellowship Program, from TUBITAK (The Scientific and Technological Research Council of Türkiye).

\section{Appendix I}

In this appendix we provide the proofs of three results in Section \ref{firstsep} and Section \ref{difficult}. We begin with the proof of Lemma \ref{lemmaI0} in Section \ref{firstsep}.

\begin{proof} (of Lemma \ref{lemmaI0})
Differentiating $I_n$ we get
\begin{equation}\label{diffI3}
\begin{split}
\dfrac{dI_n(u)}{dz}&=\dfrac{\left(\Vert \bm{u}',\cdots,\bfu^{(n-2)},\bm{u}^{(n)},\bfu^{(n+1)}\Vert+\Vert \bm{u}',\cdots,\bm{u}^{(n-1)},\bfu^{(n+2)}\Vert\right)\Vert \bm{u}',\cdots,\bm{u}^{(n)}\Vert}{\Vert \bm{u}',\cdots,\bfu^{(n)}\Vert^2}\\
&-\dfrac{\Vert \bm{u}',\cdots,\bfu^{(n-1)},\bfu^{(n+1)}\Vert\Vert \bm{u}',\cdots,\bfu^{(n-1)},\bfu^{(n+1)}\Vert}{\Vert \bm{u}',\cdots,\bfu^{(n)}\Vert^2}\\
&=-\dfrac{\Vert \bm{u}',\cdots,\bfu^{(n-2)},\bm{u}^{(n+1)}\bfu^{(n)}\Vert}{\Vert \bm{u}',\cdots,\bfu^{(n)}\Vert}\\
&+\dfrac{\Vert \bm{u}',\cdots,\bm{u}^{(n-1)},\bfu^{(n+2)}\Vert}{\Vert \bm{u}',\cdots,\bfu^{(n)}\Vert}\\
&-\dfrac{\Vert \bm{u}',\cdots,\bfu^{(n-1)},\bfu^{(n+1)}\Vert^2}{\Vert \bm{u}',\cdots,\bfu^{(n)}\Vert^2}\\
&=-I_{n-1}+I_{0}-I_n^2.
\end{split}
\end{equation}
Isolating $I_{0}$ from the above equality, we get $I_{0}=\dfrac{dI_n}{dz}+I_{n-1}+I_n^2$. 
\end{proof}

Now we address the proofs of two results in Section \ref{difficult}. First, Lemma \ref{comLem4}.

\begin{proof} (of Lemma \ref{comLem4})
Using Eq. \eqref{bell2} and Eq. \eqref{newBell}, we get that
\begin{equation}\label{init}
B_{k,m}(\omega',\omega'',\ldots,\omega^{(k+1-m)})=\sum{\dfrac{k!}{\ell_1!\cdots \ell_{k+1-m}!}\omega'^{\ell_1}\prod_{i=2}^{k+1-m}{\left(\dfrac{\omega'\Phi^{i-1}\cdot i!}{n^{i-1}(n+1)^{i-1}\cdot i!}\right)^{\ell_i}}},
\end{equation}
where the sum (see Subsection \ref{addtools}) is taken over all sequences $\ell_1,\ell_2,\ldots,\ell_{k+1-m}$ of non-negative integers such that 
\[\sum_{i=1}^{k+1-m}\ell_i=m,\mbox{ }\sum_{i=1}^{k+1-m}i\ell_i=k.\]
In particular,  
\begin{equation}\label{useit}
\sum_{i=1}^{k+1-m}(i-1)\ell_i=\sum_{i=1}^{k+1-m}(i\ell_i-\ell_i)=k-m.
\end{equation}
Using Eq. \eqref{useit}, and since the $i!$ cancels in the numerator and denominator of the product, we have 
\[
\prod_{i=2}^{k+1-m}{\left(\dfrac{\omega'\Phi^{i-1}\cdot i!}{n^{i-1}(n+1)^{i-1}\cdot i!}\right)^{\ell_i}}=\omega'^{\ell_2+\cdots+\ell_{k+1-m}}\dfrac{\Phi^{k-m}}{n^{k-m}(n+1)^{k-m}}.
\]
Substituting this into Eq. \eqref{init}, and since $\ell_1+\cdots+\ell_{k+1-m}=m$, 
\[
B_{k,m}(\omega',\omega'',\ldots,\omega^{(k+1-m)})=\dfrac{1}{n^{k-m}(n+1)^{k-m}}{\omega'}^{m}\Phi^{k-m}\sum{\dfrac{k!}{\ell_1!\cdots \ell_{k+1-m}!}}.
\]
Finally,
\[
\sum{\dfrac{k!}{\ell_1!\cdots \ell_{k+1-m}!}}=\binom{k-1}{m-1}\dfrac{k!}{m!}=L_{m,k},
\]
and the result follows. 
\end{proof}

Finally, we address the proof of Lemma \ref{comLem5}.
\begin{proof} (of Lemma \ref{comLem5})
First, using the properties of factorials we have the following alternative version of the formula in Eq. \eqref{bellnew},
\begin{equation}\label{bellnewnew}
\tilde{B}_{k,m}=\prod_{j=1}^{k-m}{\dfrac{(j+m-1)(j+m)}{jn(n+1)}},
\end{equation} 
where for $k<m$, $\tilde{B}_{k,m}=0$ and for $k=m$, $\tilde{B}_{k,m}=1$.

Next we want to find $M_{n-i}^{n+1,i+1}-(n-i+1)M_{n-i+1}^{n+1,i+1}$. By definition,
\begin{equation}\label{AEq1}
M_{n-i}^{n+1,i+1}=\begin{vmatrix}
\tilde{B}_{n+1,n+1} & \tilde{B}_{n+1,n-1} & \cdots & \tilde{B}_{n+1,i+1} & \tilde{B}_{n+1,i} \\
\tilde{B}_{n,n+1} & \tilde{B}_{n,n-1} & \cdots & \tilde{B}_{n,i+1} & \tilde{B}_{n,i} \\
\tilde{B}_{n-1,n+1} & \tilde{B}_{n-1,n-1} & \cdots & \tilde{B}_{n-1,i+1} & \tilde{B}_{n-1,i} \\
\vdots & \vdots & \ddots & \vdots & \vdots \\
\tilde{B}_{i+2,n+1} & \tilde{B}_{i+2,n-1} & \cdots & \tilde{B}_{i+2,i+1} & \tilde{B}_{i+2,i} \\
\tilde{B}_{i+1,n+1} & \tilde{B}_{i+1,n-1} & \cdots & \tilde{B}_{i+1,i+1} & \tilde{B}_{i+1,i}
\end{vmatrix}.
\end{equation}
Using the definition of $\tilde{B}_{m,k}$, the above determinant is equal to
\begin{equation}\label{AEq2}
M_{n-i}^{n+1,i+1}=\begin{vmatrix}
1 & \prod_{j=1}^{2}{\frac{(j+n-2)(j+n-1)}{jn(n+1)}} & \cdots &\prod_{j=1}^{n-i}{\frac{(j+i)(j+i+1)}{jn(n+1)}} & \prod_{j=1}^{n-i+1}{\frac{(j+i-1)(j+i)}{jn(n+1)}} \\
0 & \prod_{j=1}^{1}{\frac{(j+n-2)(j+n-1)}{jn(n+1)}} & \cdots &\prod_{j=1}^{n-i-1}{\frac{(j+i)(j+i+1)}{jn(n+1)}} & \prod_{j=1}^{n-i}{\frac{(j+i-1)(j+i)}{jn(n+1)}} \\
0 & 1 & \cdots &\prod_{j=1}^{n-i-2}{\frac{(j+i)(j+i+1)}{jn(n+1)}} & \prod_{j=1}^{n-i-1}{\frac{(j+i-1)(j+i)}{jn(n+1)}} \\
\vdots & \vdots & \ddots & \vdots & \vdots \\
0 & 0 & \cdots &\prod_{j=1}^{1}{\frac{(j+i)(j+i+1)}{jn(n+1)}} & \prod_{j=1}^{2}{\frac{(j+i-1)(j+i)}{jn(n+1)}} \\
0 & 0 & \cdots &1 & \prod_{j=1}^{1}{\frac{(j+i-1)(j+i)}{jn(n+1)}}
\end{vmatrix}.
\end{equation}
Analogously,
\begin{equation}\label{AEq3}
M_{n-i+1}^{n+1,i+1}=\begin{vmatrix}
\tilde{B}_{n+1,n} & \tilde{B}_{n+1,n-1} & \cdots & \tilde{B}_{n+1,i+1} & \tilde{B}_{n+1,i} \\
\tilde{B}_{n,n} & \tilde{B}_{n,n-1} & \cdots & \tilde{B}_{n,i+1} & \tilde{B}_{n,i} \\
\tilde{B}_{n-1,n} & \tilde{B}_{n-1,n-1} & \cdots & \tilde{B}_{n-1,i+1} & \tilde{B}_{n-1,i} \\
\vdots & \vdots & \ddots & \vdots & \vdots \\
\tilde{B}_{i+2,n} & \tilde{B}_{i+2,n-1} & \cdots & \tilde{B}_{i+2,i+1} & \tilde{B}_{i+2,i} \\
\tilde{B}_{i+1,n} & \tilde{B}_{i+1,n-1} & \cdots & \tilde{B}_{i+1,i+1} & \tilde{B}_{i+1,i}
\end{vmatrix}.
\end{equation}
And again using the definition of $\tilde{B}_{m,k}$, we get
\begin{equation}\label{AEq4}
M_{n-i+1}^{n+1,i+1}=\begin{vmatrix}
1 & \prod_{j=1}^{2}{\frac{(j+n-2)(j+n-1)}{jn(n+1)}} & \cdots &\prod_{j=1}^{n-i}{\frac{(j+i)(j+i+1)}{jn(n+1)}} & \prod_{j=1}^{n-i+1}{\frac{(j+i-1)(j+i)}{jn(n+1)}} \\
1 & \prod_{j=1}^{1}{\frac{(j+n-2)(j+n-1)}{jn(n+1)}} & \cdots &\prod_{j=1}^{n-i-1}{\frac{(j+i)(j+i+1)}{jn(n+1)}} & \prod_{j=1}^{n-i}{\frac{(j+i-1)(j+i)}{jn(n+1)}} \\
0 & 1 & \cdots &\prod_{j=1}^{n-i-2}{\frac{(j+i)(j+i+1)}{jn(n+1)}} & \prod_{j=1}^{n-i-1}{\frac{(j+i-1)(j+i)}{jn(n+1)}} \\
\vdots & \vdots & \ddots & \vdots & \vdots \\
0 & 0 & \cdots &\prod_{j=1}^{1}{\frac{(j+i)(j+i+1)}{jn(n+1)}} & \prod_{j=1}^{2}{\frac{(j+i-1)(j+i)}{jn(n+1)}} \\
0 & 0 & \cdots &1 & \prod_{j=1}^{1}{\frac{(j+i-1)(j+i)}{jn(n+1)}}
\end{vmatrix}.
\end{equation}
Notice that $M_{n-i}^{n+1,i+1}$ and $M_{n-i+1}^{n+1,i+1}$ only differ in the first column. Thus, we deduce that the difference $M_{n-i}^{n+1,i+1}-(n-i+1)M_{n-i+1}^{n+1,i+1}$ is equal to
\begin{equation}\label{AEq5}
\begin{vmatrix}
-(n-i) & \prod_{j=1}^{2}{\frac{(j+n-2)(j+n-1)}{jn(n+1)}} & \cdots &\prod_{j=1}^{n-i}{\frac{(j+i)(j+i+1)}{jn(n+1)}} & \prod_{j=1}^{n-i+1}{\frac{(j+i-1)(j+i)}{jn(n+1)}} \\
-(n-i+1) & \prod_{j=1}^{1}{\frac{(j+n-2)(j+n-1)}{jn(n+1)}} & \cdots &\prod_{j=1}^{n-i-1}{\frac{(j+i)(j+i+1)}{jn(n+1)}} & \prod_{j=1}^{n-i}{\frac{(j+i-1)(j+i)}{jn(n+1)}} \\
0 & 1 & \cdots &\prod_{j=1}^{n-i-2}{\frac{(j+i)(j+i+1)}{jn(n+1)}} & \prod_{j=1}^{n-i-1}{\frac{(j+i-1)(j+i)}{jn(n+1)}} \\
\vdots & \vdots & \ddots & \vdots & \vdots \\
0 & 0 & \cdots &\prod_{j=1}^{1}{\frac{(j+i)(j+i+1)}{jn(n+1)}} & \prod_{j=1}^{2}{\frac{(j+i-1)(j+i)}{jn(n+1)}} \\
0 & 0 & \cdots &1 & \prod_{j=1}^{1}{\frac{(j+i-1)(j+i)}{jn(n+1)}}
\end{vmatrix}.
\end{equation}

We want to see that this determinant is zero. Let $\bfc_k$ denote the $k$-th column of the determinant, and let us show that the last column, i.e. ${\bf c}_{n-i+1}$, is a linear combination of the remaining ${\bf c}_k$. To do this we can directly compute $\lambda_1,\lambda_2,\ldots,\lambda_{n-i}$ such that ${\bf c}_{n-i+1}=\sum_{k=1}^{n-i}\lambda_k {\bf c}_k$. Indeed, imposing this we get a linear system ${\mathcal M}$ with $n-i$ unknowns (the $\lambda_k$) and $n-i+1$ equations whose coefficient matrix is the submatrix corresponding to the first $n-i$ columns of the matrix in Eq. \eqref{AEq5}, and whose equations correspond to the rows of the matrix in Eq. \eqref{AEq5}. If we pick the equations 2 to $n-i+1$ of the system ${\mathcal M}$, which correspond to the rows 2 to $n-i+1$ of the matrix in Eq. \eqref{AEq5}, we get a triangular system whose solution $\widehat{\lambda}_1,\ldots,\widehat{\lambda}_{n-i}$ can be directly computed. Then it is a lengthy, but direct exercise, to check that $\widehat{\lambda}_1,\ldots,\widehat{\lambda}_{n-i}$ also satisfy the first equation of ${\mathcal M}$, so  ${\bf c}_{n-i+1}=\sum_{k=1}^{n-i}\widehat{\lambda}_k {\bf c}_k$.     

\end{proof}

\end{document}